\newcommand*{\citen}{}
\DeclareRobustCommand*{\citen}[1]{%
  \begingroup
    \romannumeral-`\x 
    \setcitestyle{numbers}%
    \cite{#1}%
  \endgroup
}
\newcommand*{\mathcolor}{} 
  \def\mathcolor#1#{\mathcoloraux{#1}}
  \newcommand*{\mathcoloraux}[3]{%
    \protect\leavevmode
    \begingroup
      \color#1{#2}#3%
      \endgroup
    }
 \newcommand{\E}{\mathbb{E}} 
\newcommand{\Prob}{\mathbb{P}} 
\def\Var{\mathbb{V}\mathrm{ar}} 
\def\Exp{\mathcal{E}{{xp}}} 
\newcommand{\indic}{\mathbb{I}} 
\newcommand{\one}{\boldsymbol{1}} 
\newcommand{\defd}{\overset{\Delta}{=} } 
\newcommand{\rr}{\mathbb{R}} 
\newcommand{\diag}{ {\operatorname{diag} } } 
\newcommand{\NC}{ {N_{C}}} 
\newcommand{\NS}{ {N_{S}}} 
\newcommand{\M}{\mathcal{M}} 
\newcommand{\ep}{\varepsilon}
\newcommand{\btheta}{{\boldsymbol{\theta}}} 
\newcommand{\balpha}{ { {\boldsymbol{\alpha} } } } 
\newcommand{\bbeta}{ { {\boldsymbol{\beta} } } } 
\newtheorem{theorem}{Theorem}[section]
\newtheorem{assumption}[theorem]{Assumption}
\newtheorem{proposition}[theorem]{Proposition}
\newtheorem{lemma}[theorem]{Lemma}
\newtheorem{corollary}[theorem]{Corollary}
\newtheorem{algorithm}{Algorithm}
\begin{document}
\title{Stochastic Averaging and Sensitivity Analysis for Two Scale Reaction Networks}
\thanks{Submitted to the Journal of Chemical Physics.}

\author{Araz Hashemi}
\email[Corresponding author, ]{araz@udel.edu}
\affiliation{Department of Mathematical Sciences, University of Delaware}

\author{Marcel Nunez}
\email{mpnunez@udel.edu}
\affiliation{Department of Chemical and Biomolecular Engineering, University of Delaware}
 
\author{Petr Plech\'a\v{c}}
\email{plechac@udel.edu}
\affiliation{Department of Mathematical Sciences, University of Delaware}

\author{Dionisios G. Vlachos}
\email{vlachos@udel.edu}
\affiliation{Department of Chemical and Biomolecular Engineering, University of Delaware}

\date{\today}
\begin{abstract}
In the presence of multiscale dynamics in a reaction network, 
direct simulation methods become inefficient as they 
can only advance the system on the smallest scale. 
This work presents stochastic averaging techniques
to accelerate computations 
for obtaining estimates of expected 
values and sensitivities with respect to the steady state distribution. 
A two-time-scale formulation is used to establish bounds 
on the bias induced by the averaging method.
Further, this formulation provides a framework 
to create an accelerated `averaged' version of most
single-scale sensitivity estimation methods.
In particular, we propose a new lower-variance ergodic likelihood ratio method
for steady state 
estimation and show how one can adapt it to accelerate simulations of multiscale systems.
{Lastly, we develop an adaptive ``batch-means'' stopping rule for determining when to terminate
the micro-equilibration process.}
 
\end{abstract}

\keywords{multiscale dynamics, sensitivity analysis, steady state, stochastic averaging, ergodic, likelihood ratio, batch-means}

\maketitle

\section{Introduction}

Stochastic simulations have been an essential tool 
in analyzing reaction networks encountered in
biology, catalysis, and materials growth. 
However, it is commonplace for reaction networks to exhibit a large disparity in time scales. 
These multi-scale stochastic reaction networks can impose
an enormous computational burden in order to simulate them exactly.
Exact techniques require computation of every reaction 
at the fastest time-scale, resulting in an exponentially
growing load to observe dynamics on the slowest time-scale.
Many works have attempted to develop approximate
algorithms which allow faster computation with minimal loss of accuracy
\cite{chatterjee_overview_2007, 
gillespie_approximate_2001, cao_avoiding_2005,
cao_efficient_2006, rathinam_stiffness_2003,
chatterjee_binomial_2005,tian_binomial_2004,
salis_accurate_2005,
samant_overcoming_2005,cao_slow-scale_2005, e_nested_2007,
huang_strong_2014,
kang_separation_2013, gupta_sensitivity_2014}.

One approach,
which we refer to as Stochastic Averaging,
takes its inspiration from classical singular perturbation theory of
ordinary differential equations
\cite{samant_overcoming_2005,cao_slow-scale_2005, e_nested_2007, huang_strong_2014, salis_accurate_2005}.
The idea is that the fast dynamics come to quasi-equilibrium before
the slow dynamics take effect, 
hence one may model the slow time scale dynamics
with their averages against the steady-state 
distribution of the fast dynamics.
By estimating the steady-state expectations of the slow propensities, 
one can then 
jump the system ahead to the next slow reaction
and advance the time clock on the
slow scale (skipping over needless computations of fast reactions).

In addition, one often desires the sensitivities of the system 
$S_f(\theta_i) = \frac{\partial}{\partial \theta_i} \E_{\btheta} \{ f(X(t)) \}$
with respect to
the reaction parameters $\theta_i$.
The sensitivities give important insight into the system,
indicating directions for gradient-descent type optimization as well as 
determining bounds for quantifying the uncertainty 
\cite{dupuis_path-space_2015}.
Current techniques for estimating the sensitivities 
have {\em large variance},
requiring many more samples than those for estimating
$\E_{\btheta} \{f(X)\}$ alone 
\cite{wolf_hybrid_2015,sheppard_pathwise_2012,
wang_efficiency_2014, gupta_efficient_2014}.
Thus computing sensitivities of multi-scale systems 
using single-scale
techniques is often a computationally intractable problem.

{
In this work,
we use results from Two-Time-Scale (TTS) Markov chains
\cite{yin_continuous-time_2013} 
to show the error 
of stochastic averaging algorithms
is inverse to the scale disparity in the system.
As opposed to the previous approaches of transforming 
the system variables into auxiliary fast and slow 
variables\cite{e_nested_2007, huang_strong_2014}, 
we partition the underlying (discrete) state space 
and derive a singular perturbation expansion 
of the corresponding probability measure. 
The first order term can then be identified 
from computables of the averaged process, 
leading to a rigorous theoretical framework for applying 
singular perturbation averaging for stochastic systems.

Furthermore, this new formulation allows one to identify a 
macroscopic ``averaged'' reaction network 
on a reduced state space 
whose time-steps are on the macro (slow) time-scale. 
Thus, it provides a framework for applying single-scale 
sensitivity analysis techniques to the multi-scale system. 
Previous works have exploited similar model
reduction techniques to estimate
sensitivities via finite differences\cite{gupta_sensitivity_2014}
or a ``truncated'' version of a likelihood ratio
estimator\cite{nunez_steady_2015}.
This work develops an accelerated
``Two-Time-Scale'' version of the 
Likelihood Ratio (Girsanov Transform) 
method \cite{wang_efficiency_2014, plyasunov_efficient_2007, 
warren_steady-state_2012, glynn_likelihood_1990}
for estimating system sensitivities of the multiscale system.
The TTS-LR method computes sensitivity reweighting coefficients
 of the macro (reduced-state) process using a representation 
in terms of the steady-state sensitivities of the micro (fast) process.
These micro-level sensitivities can in turn be computed online during
the micro-equilibriation process.
To this end, we propose a new 
lower-variance 
``Ergodic Likelihood Ratio''
estimator for approximating {\em steady-state sensitivities} of single and multi-scale systems.
}

{
The outline of the remainder of the paper is as follows:
Section \ref{sec:formulation} gives the theoretical basis of the paper.
The Two-Time-Scale formulation is presented and error bounds are established.
Section \ref{sec:TTS_sens} then uses the TTS framework for the purpose of sensitivity analysis.
A new ergodic likelihood ratio estimator is developed for single-scale steady-state
sensitivity analysis, and is then adapted to the multiscale system.
Section \ref{sec:BMstop} develops a batch-means stopping rule 
for determining when the micro-scale
system has come to equilibrium.
Numerical results are presented in Section \ref{sec:sim_results} supporting
the effectiveness of the methods presented.
Concluding remarks are given in Section \ref{sec:conclusion},
and proofs of theorems are relegated to the Appendix.
}

\section{Formulation}
\label{sec:formulation}

\subsection{Markov Chain Model of Reaction Networks}
We briefly review the Markov chain model of reaction networks.
While our motivation stems from chemical reaction networks, we note that 
much of the formulation carries over to general Markov chains on integer lattices.

Suppose we have $d$ species
described by 
$X(t) = [X_1(t), X_2(t), \dots, X_d(t)] \in \M \subset \mathbb{Z}^d$
and $M$ reactions $r_1, r_2, \dots, r_M$. 
In stochastic reaction networks, one views 
$X(t)$ as a continuous-time Markov chain (CTMC) in the state space $\M$.
When reaction $r$ fires at time $t$, the state is updated by the 
{\em stoichiometric vector} $\zeta_r$ so that 
$X(t) = X(t-) + \zeta_r$.
Given the set of reaction parameters $\btheta=[\theta_1, \theta_2, \dots]$,
one characterizes the probabilistic evolution of $X(t)$ by the
{\em propensity functions} (intensity functions) $\lambda_r(x;\btheta)$.
The propensity functions are such that, given $X(t)=x$, the probability of one or
more firing of reaction $r$ during time $(t, t+h]$ is 
$\lambda_r(x;\btheta)h + o(h)$ as $h \to 0$; i.e. $\lambda_r(x;\btheta)$ is the
instantaneous rate/probability of reaction $r$ firing.

A common model for the propensities functions is that of {\em mass-action kinetics}.
Under this assumption, the propensity functions are of the form
\begin{align} \label{eq:mass-action-prop}
  \lambda_r(x;\btheta) = \theta_r \cdot b_r(x) = 
  \theta_r \cdot \prod_{i=1}^d \frac{x_i!}{(x_i - \nu_{r,i} )!} 
  \indic_{\{x-\nu_{r,i} \ge 0\} }
\end{align}
where $\nu_{r,i}$ is the number of molecules of species $i$ required for reaction $r$
to fire. Mass action kinetics assumes the system is well-mixed,
so molecular interactions are proportional to their counts. 

From the propensity functions $\lambda_r(x;\btheta)$, one can construct the 
infinitesimal generator $Q=Q(\btheta)$ of the Markov chain. Viewed as an operator
on functions $f$ of the state-space $\M$, we have
\begin{align*}
  \left( Qf \right)(x) = \sum_{r=1}^M \lambda_r(x;\btheta) 
  \left( f(x+\zeta_r) - f(x)  \right)
\end{align*}
For finite state-space $\M$ (bounded molecule counts), one may also view
the generator $Q(\btheta)$ as a matrix. 
While the state-space is typically intractably large, the generator is sparse with only $M+1$
non-zero entries in each row,
\begin{multline*}
	  Q(\boldsymbol{\theta}) = \\ \bordermatrix{ ~
        & \dots & x+\zeta_2 & \dots & x & \dots & x+\zeta_1 & \dots  \cr 
        \vdots & ~ & ~ & ~ & ~ & ~ & ~ & ~ \cr
	x & \dots & \lambda_2(x;\boldsymbol{\theta}) & \dots & -\lambda_0(x;\boldsymbol{\theta})
	& \dots & \lambda_1(x;\boldsymbol{\theta)} & \dots \cr
      \vdots & ~ & ~ & ~ & ~ & ~ & ~ & ~ }
      \end{multline*}
where 
$\lambda_0(x;\boldsymbol{\theta}) = \sum_{r=1}^M \lambda_r(x;\boldsymbol{\theta})$.   

Writing $R_r(t)$ to be the counting process 
representing how many reactions of type $r$
have fired by the time $t$, we have that
$X(t) = X(0) + \sum_{r=1}^M R_r(t) \zeta_r$.
Using the {\em random time change} representation 
\cite{anderson_continuous_2011, ethier_markov_1986},
we write $X(t)$ as 
\begin{align}
  \begin{aligned}
  \label{eq:random-time-change}
    X(t) = X(0) + \sum_{r=1}^M Y_r\left( 
    \int_0^t \lambda_r(X(s);\btheta)ds \right) \zeta_r
  \end{aligned}
\end{align}
where $Y_r(\cdot)$ are independent unit-rate Poisson processes.
This representation is tremendously useful in conducting analysis
of the trajectories.
In particular, it leads to formulations of the Next-Reaction Method
\cite{gibson_efficient_2000, anderson_modified_2007} 
and interpreting simulated trajectories in 
the path-space to allow for coupling paths 
\cite{rathinam_efficient_2010, anderson_efficient_2012, gupta_efficient_2014}
as well as path-wise differentiation 
\cite{sheppard_pathwise_2012, wolf_hybrid_2015}.

When simulating exact trajectories (using any exact method; Direct SSA, Next-Reaction,
etc), the propensity functions $\lambda_r(x; \btheta)$ probabilistically determine
both the time between reactions $\Delta t$ 
as well as the next reaction $r^*$ to fire.
The likelihood that reaction $r_k$ is the next to fire is proportional to its
propensity $\lambda_k(x;\btheta)$; i.e. $\Prob_{\btheta, x} \left\{ r^* = r_k  \right\} 
\propto \lambda_k(x;\btheta) $. 
The time between reactions has an exponential
distribution with the rate $ \lambda_0(x;\btheta) 
= \sum_{r=1}^M \lambda_r (x;\btheta) $ ; i.e. $\Delta t \sim \Exp \left( \lambda_0(
x; \btheta)  \right) $ with the mean 
$\E_{x,\btheta} \{ \Delta t \} = 1/ \lambda_0(x,\btheta)$.

Multi-scale dynamics occur when the propensity functions have large magnitude disparities.
If $\lambda_k(x; \btheta) \gg \lambda_j(x;\btheta)$ for all $j \neq k$, then
$\Prob \{ r^*=r_k \} \approx 1$ and 
$\Delta t \sim \Exp \left( \lambda_0(x;\btheta) \right) \approx 
\Exp \left( \lambda_k(x;\btheta) \right) $. 
Thus, with a high probability the next reaction in an exact trajectory will be $r_k$ and the
time clock will advance on the order of $1/\lambda_k(x;\btheta)$. 
Such multi-scale networks then require an enormous 
number of computations to sample ``slow'' reactions 
and reach the required time horizon for the entire system to relax.

\subsection{Two-Time-Scale Reaction Networks}
\label{sec:TTSRN}
{ 
We now consider reaction networks with two scales of dynamics.
For further motivation and discussion of 
reaction networks with multiple time-scales, 
we refer readers to
Refs. \citen{kang_separation_2013, e_nested_2007, huang_strong_2014, gupta_sensitivity_2014}
and references therein.
We instead focus on our formulation for the separation of time-scales
and the averaged process via the partitioning of the state space into
``fast-classes''.
Though analogous to the techniques of transforming the species variables
into auxiliary fast/slow variables \cite{e_nested_2007, huang_strong_2014}
or projecting to remainder spaces \cite{gupta_sensitivity_2014},
the direct partitioning of the state space 
will allow us to construct
a singular perturbation expansion of the probability measure
and establish the rate of convergence of such averaging methods.
In addition, it provides a framework for applying Likelihood Ratio type
sensitivity estimates to the averaged process as we shall see in the sequel.

Here, we assume that the disparity in the propensity functions 
results from magnitude disparities in the reaction parameters $\theta_r$.
In order to illustrate the stiffness, we consider the reaction network 
  \begin{align*}
    \begin{aligned}
      *
     \mathrel{\mathop{\rightleftharpoons}^{ {\alpha}_1 / \ep}_{\mathrm{ {\alpha}_2 / \ep}}} 
    A & \qquad &
    A
     \mathrel{\mathop{\rightleftharpoons}^{ {\beta}_1}_{\mathrm{{\beta}_2}}} 
    B & \qquad & B 
     \mathrel{\overset{ {\beta}_3}{\rightharpoonup}}
    * 
    \end{aligned}
  \end{align*}
where $\ep \ll 1$ is a measure of the scale disparity (stiffness) between the fast reaction parameters 
$\balpha=[\alpha_1, \alpha_2] $ and the slow reaction
parameters $\bbeta=[\beta_1, \beta_2, \beta_3 ]$.
As the stiffness parameter $\ep \to 0$ the fast reactions $\balpha$ dominate the system,
resulting in the multi-scale computational problem described above.

In general, 
suppose a reaction network has species $[X_1, X_2, \dots,, X_d]$
and reactions $r_1, \dots, r_M$. 
We shall assume that the propensity functions $\lambda_r(x;\btheta)$ 
are of the form (\ref{eq:mass-action-prop}) (mass-action kinetics, 
though other forms may also be treated), 
and that
each reaction is indexed by its own reaction parameter $\theta_r$.
As in the illustrating example,
we assume that there is a scale disparity in the reaction parameters
between a set of 
``fast reactions'' and a set of 
``slow reactions''. 
Thus we can write
$\btheta = [\theta_1, \dots, \theta_M]
= [\balpha/\ep, \bbeta]$, where
$\bbeta=[ \beta_1, \beta_2, \dots, \beta_{M_s}]$
are the slow reaction parameters,
$\ep \ll 1$ is the stiffness parameter,
and $\balpha=[\alpha_1, \dots, \alpha_{M_f}] $
are the underlying (rescaled) reaction parameters 
for the fast reactions.

}

To ease referencing, we will often index reactions and propensity functions
directly by their reaction parameter. E.g.,  
$r_{\beta_i}$ is the reaction with reaction parameter $\beta_i$
and propensity function $\lambda_{\beta_i} (x; \btheta)= \lambda_{\beta_i}(x;\bbeta) 
= \beta_i 
b_{\beta_i}(x) $ (where $b_{\beta_i}$ is given by 
\eqref{eq:mass-action-prop}).
For the fast reactions $\alpha_i$, we use 
$\lambda^\ep_{\alpha_i}(x;\btheta) 
=\lambda^\ep_{\alpha_i}(x;\balpha)
= ({\alpha_i}/ {\ep}) 
b_{\alpha_i}(x)$ to denote the exact propensity function
and $\lambda_{\alpha_i}(x;\btheta) = \alpha_i \  b_{\alpha_i}(x)$ 
to denote the rescaled version. 

Let $X^\ep(t)$ denote the Markov chain determined by 
the exact propensity functions
$\lambda^\ep_\alpha(x;\btheta)$ and $\lambda_\beta(x;\btheta)$.
We can write the generator $Q^\ep=Q^\ep(\balpha,\bbeta)$ of the exact process
as before, and observe that
\begin{align}
\begin{aligned}
\label{eq:Q-ep_decomp}
(Q^\ep f) (x)
&= \sum_{r=1}^M \lambda_r(x;\btheta) 
\left( f(x+\zeta_r) -f(x)  \right) \\
&= \frac{1}{\ep} \sum_{i=1}^{M_f} \lambda_{\alpha_i}(x;\balpha) 
\left( f(x+\zeta_{\alpha_i}) -f(x)  \right) \\
& \qquad + \sum_{j=1}^{M_s} \lambda_{\beta_j}(x;\bbeta)
\left( f(x+\zeta_{\beta_j}) -f(x)  \right) \\
&= \left( \left[ \frac{1}{\ep} \widetilde{Q}(\balpha) + 
\widehat{Q}(\bbeta) \right] f \right) (x)
\end{aligned}
\end{align}
where $\widehat{Q}(\bbeta)$ is the generator of the chain under only 
the slow dynamics (determined by slow reactions $r_\beta$), 
and $\widetilde{Q}(\balpha)$ is the generator of the chain under
only the fast dynamics (with the rescaled propensity functions
$\lambda_\alpha(x;\balpha)$).
Thus we have a decomposition of the generator into the fast
and slow dynamics, 
$Q^\ep(\balpha, \bbeta) = (1/\ep) \widetilde{Q}(\balpha) + 
\widehat{Q}(\bbeta) $
One can also view the generator 
$Q^\ep(\balpha, \bbeta) = (1/\ep) \widetilde{Q}(\balpha) + \widehat{Q}(\bbeta)$
as a matrix. In this case, we can write the element corresponding 
to the rate of transition
from state $x$ to state $y$ as
\begin{align*} 
  \left[ \widetilde{Q}(\boldsymbol{\alpha})\right]_{x,y}
  =\left\{
    \begin{array}{*1{>{\displaystyle}c }l}
    -\sum_{\alpha \in \boldsymbol{\alpha}} \lambda_\alpha(x)
    & \quad x=y \\
    \sum_{\alpha : r_\alpha(x)=y } \lambda_\alpha(x)
    & \quad x \neq y
  \end{array}
  \right.
\end{align*}
and similarly for 
$ \left[ \widehat{Q}(\boldsymbol{\beta})\right]_{x,y}$.

As $\ep \to 0$ only the fast reactions $r_\alpha$ fire, and so we
define an equivalence relation on states $s \in \M$,
by $s_i \leftrightarrow s_j$ if they are mutually accessible
through only fast reactions. 
This defines a partition of the state space $\M$
into ``fast-classes'' $\M_k$ which are by construction the
invariant (irreducible) classes of $\M$ under $\widetilde{Q}(\balpha)$;
e.g.
\begin{align*}
  \M= \bigcup_{k=1}^\NC \M_k =
  \left\{ x^{(1)}_1, x^{(1)}_2, \dots, x^{(1)}_{m_1},
  x^{(2)}_1, \dots, x^{(2)}_{m_2},\dots
\right\}
\end{align*}
where $\NC$ are the number of invariant ``fast-classes'', 
and $m_k = \big| \M_k \big|$ is the number of states inside
fast-class $\M_k$.
For ease of presentation, in the present discussion we shall assume the state space is finite.
\begin{assumption}[Finite State Space] \label{assum:finite_states}
The state space $\M$ is finite, such that $|\M| =m$.
Thus the number of fast classes $\NC<\infty$ and 
the number of states in each fast-class $m_k<\infty$,
so that $m=m_1 + m_2 + \dots + m_\NC$.
\end{assumption}
Assumption~\ref{assum:finite_states} is made only to simplify the discussion.
One may also treat the infinite state case with some mild 
additional conditions on $\widetilde{Q}(\balpha)$ and 
$\widehat{Q}(\bbeta)$ to ensure non-explosiveness and ergodicity of the 
underlying (rescaled) chain\cite{yin_continuous-time_2013}.
In addition, we shall impose the following assumption.
\begin{assumption}[Recurrent States] \label{assum:recurr_states}
  Each state of $\M$ is recurrent, so that there are no absorbing/transient
  states.
\end{assumption}
Assumption~\ref{assum:recurr_states} is satisfied if, for example, all 
reactions are reversible 
(or often times if only the fast reactions are reversible).
One may also treat the case with transient/absorbing classes 
with some additional 
stability assumptions to ensure the fast dynamics decay to steady-state; 
see Section 4.4 of Ref.
\citen{yin_continuous-time_2013}
for more details.

Under Assumption~\ref{assum:recurr_states}, we can reorder the state space so that
$\widetilde{Q}(\balpha)=\diag [ \widetilde{Q}^{(1)}(\balpha),
\widetilde{Q}^{(2)}(\balpha), \dots, 
\widetilde{Q}^{(\NC)}(\balpha)  ]$ 
is block-diagonal. Here, one can view the generators
$\widetilde{Q}^{(k)} (\balpha)$ as the restriction of $\widetilde{Q}(\balpha)$
to the (irreducible) fast-class $\M_k$ (fast-only dynamics when $X(0) \in \M_k$).
In light of the finite state-space and positive recurrence, each
$\widetilde{Q}^{(k)}(\balpha)$ is ergodic and has a stationary (steady-state)
probability measure $\widetilde{\pi}^{(k)} = \widetilde{\pi}^{(k)}(\balpha)$ such that 
$\widetilde{\pi}^{(k)} \widetilde{Q}^{(k)} = \boldsymbol{0}$ 
(with $\widetilde{\pi}^{(k)}, \boldsymbol{0}$ 
interpreted as row vectors).

Using the above formulation, we can restate the averaging 
principle \cite{e_nested_2007, cao_slow-scale_2005, samant_overcoming_2005, huang_strong_2014,
kang_separation_2013, gupta_sensitivity_2014}
as follows.
For small $\ep$ and $X(0) \in \M_k$, $X^\ep(\cdot)$ will relax to 
its steady-state distribution $\pi^{(k)}$ on the micro 
time-scale $\ep t$ before any slow reaction fires
(on the macro time scale $t$).
Thus, one can use the stationary average of the slow propensity functions
\begin{multline}
    \lambda_{\beta_j} \left( X 
    \left( t\right); \bbeta, X(0) \in \M_k \right) \\
    \sim \overline{\lambda}_{\beta_j} \left( \M_k; \btheta \right)
    \defd \E_{\widetilde{\pi}^{(k)}(\balpha)} \left\{ \lambda_{\beta_j} 
  \left( X; \bbeta \right) \right\}
\end{multline}
to determine the distribution of time until the next slow reaction as well
as the probabilities for the next slow reaction being $r_{\beta_j}$.
These can then be used to simulate a trajectory of the slow (macro-scale) 
process. 
We shall further develop this idea more precisely in the remainder.

Write $m=\big| \M \big|$, and $m_k= \big| \M_k \big|$ as before, so that 
$Q^\ep(\btheta), \widetilde{Q}(\balpha), \widehat{Q}(\bbeta) \in 
\mathbb{R}^{m \times m}$, 
$\widetilde{Q}^{(k)}(\balpha) \in \mathbb{R}^{m_k \times m_k}$, 
and $\pi^{(k)}(\balpha) \in \mathbb{R}^{1 \times m_k}$.
Write $\widetilde{\pi}(\balpha)= \diag\left[ 
  \widetilde{\pi}^{(1)}, \widetilde{\pi}^{(2)}, 
\dots, \widetilde{\pi}^{(\NC)} \right] \in \mathbb{R}^{\NC \times m}$. 
Write $\one_{m_k}$ for $[1, 1, \dots, 1]' \in \mathbb{R}^{m_k \times 1}$
and $\widetilde{\one} = \diag\left[ \one_{m_1}, \one_{m_2},
\dots, \one_{m_\NC} \right]$

With $\widetilde{\pi}(\balpha)$ describing the limit behavior inside
each fast-class on the micro time scale, 
one can then consider the distribution of the exact system $X^\ep$
on the macro time scale.
Heuristically, one expects a trajectory to enter a fast-class of states
$\M_{k_1}$ and quickly iterate through many fast reactions 
until the distribution of the trajectory reaches the steady-state 
$\widetilde{\pi}^{(k_1)}$.
Eventually, a slow reaction will fire to move the trajectory to a new fast-class
$\M_{k_2}$ (see Figure~\ref{fig:TTS_CRN_example}).
Indeed, 
Writing 
\begin{align}
  \begin{aligned} \label{eq:Q-bar-matrix}
    \overline{Q} = \overline{Q}(\balpha, \bbeta)  \defd 
    \left[ \widetilde{\pi}(\balpha) \cdot \widehat{Q}(\bbeta) \cdot 
    \widetilde{\one} \right]
    \in \mathbb{R}^{\NC \times \NC} ,
  \end{aligned}
\end{align} 
we see that $\overline{Q}$ is itself a generator of 
an ``averaged'' CTMC reaction network, 
whose ``states'' correspond to fast-classes $\M_k$.
Write $\overline{X}(t)$ for the ``averaged'' process generated by $\overline{Q}$.
Together, $\overline{Q}$ and $\overline{X}(t)$ describe the limit (as $\ep \to 0$)
of the average rate that the exact process $X^\ep(t)$ moves between the 
fast-classes $\left\{ \M_k \right\}_{k=1}^\NC$ via slow reactions.

Furthermore, we can identify the elements of $\overline{Q}$ from the 
steady-state averages of the slow propensity functions.
First, note that every slow reaction carries each fast class to a unique
new fast-class; that is, if $x \leftrightarrow y$ and $\lambda_\beta(x), 
\lambda_\beta(y) >0$, then 
$r_\beta(x) \leftrightarrow r_\beta(y)$. 
Thus, $r_\beta(\M_k)$ is well-defined.
Then, using the form of $\widehat{Q}$ together with 
\eqref{eq:Q-bar-matrix}, we have
\begin{align}
  \begin{aligned}
    \label{eq:Q-bar-lambda}
    \overline{Q}_{k_1, k_2}
    &= 
     \sum_{ \substack{ \beta \in \bbeta \\ r_\beta(\M_{k_1} ) = \M_{k_2} } }
    \overline{\lambda}_\beta (\M_{k_1}; \btheta)
  \end{aligned}
\end{align}
for $k_1 \neq k_2$, and similarly we see that 
\begin{align}
  \label{eq:lambda-bar_0}
  \overline{Q}_{k_1, k_1} = 
  - \sum_{\beta \in \bbeta} \overline{\lambda}_\beta( \M_{k_1}; \btheta) 
  \defd - \overline{\lambda}_{\beta_0} (\M_{k_1} ; \btheta) .
\end{align}
With this formulation, we see that generator $\overline{Q}$ 
corresponds to a meta ``macro'' reaction network with 
the state-space $\overline{\M} = \left\{ \M_1, \M_2, \dots, \M_\NC \right\}$, 
reactions $\{ r_\beta :\beta \in \bbeta \}$ and propensities 
$\{ \overline{\lambda}_\beta(\overline{X}; \btheta) : \beta \in \bbeta \}$.
Figure ~\ref{fig:TTS_CRN_example} depicts such a macro chain
for the macro process $\overline{X}(t)$.
\begin{figure}
  \centering
    \includegraphics[width=0.45\textwidth]{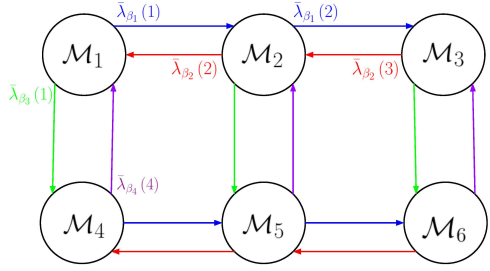}
    \caption{A depiction of the macro chain of a Two-Time-Scale Reaction Network.
      Each fast-class $\M_k$ is an equivalence class of states accessible
      by fast-reactions $r_\alpha$. Slow reactions $r_\beta$ carry each 
      fast-class to a unique new fast-class.
      The averaged system $\overline{X}(t)$ forms a meta ``macro'' Markov chain
      among fast-classes with propensities $\overline{\lambda}_\beta
      \left( \overline{X} ; \bbeta, \widetilde{\pi}(\balpha) \right)$.}
    \label{fig:TTS_CRN_example}
\end{figure}

If we can estimate the average slow-propensities
$\widehat{\overline{\lambda}}_\beta(\M_k; \btheta)$ within each fast-class
(say, through ergodic time averages of the fast-only process),
then one can simulate a trajectory of the macro process $\overline{X}(t)$
from these average propensities using {\em any} 
single-scale Monte Carlo simulation (e.g. Direct SSA, Next-Reaction, etc).
Furthermore, 
if one is ultimately concerned with estimating 
$\E_{\btheta}\left\{ f \left( X^\ep(t) \right) \right\}$
for some observable (quantity of interest) $f:\M \to \rr$, 
then one can define an augmented functional
$\overline{f}$ on $\overline{\M}$ by
\begin{align}
\label{eq:fbar_defn}
  \overline{f} \left( \M_k ; \balpha \right)
  \defd \E_{\widetilde{\pi}^{(k)}(\balpha) } \left\{ f(X) \right\} 
  =\sum_{x \in \M_k} f(x) \widetilde{\pi}^{(k)}_x(\balpha)
\end{align}
It shall be shown that 
$\E_{\btheta} \left\{ \overline{f}(\overline{X}(t)) \right\} 
\approx \E_{\btheta} \left\{ f\left( X^\ep(t) \right) \right\}$
for large enough $t$ and sufficiently small $\ep$.

To illustrate how one can implement the averaging scheme to generate 
macro-trajectories, we present the following TTS vlersion of the
Direct SSA (since it is the most succinct to write). 
In this case, the TTS SSA is essentially the same algorithm as in 
Refs. \citen{e_nested_2007}, \citen{samant_overcoming_2005}.
However, we emphasize that
the same method can be used to create a TTS version of any
exact method defined by the propensity functions.
In particular, one can just as easily construct
an analogous TTS Next-Reaction type algorithm
\cite{rathinam_efficient_2010,anderson_efficient_2012, anderson_modified_2007}
for tightly coupled trajectories.

\begin{algorithm}[TTS-SSA] \label{alg:TTS-SSA}
  \rm 
  To simulate a trajectory of the macro-process
  $\overline{X}(T)$ until macro time-horizon $T_{final}$:
  \begin{enumerate}[\bf (1)]
    \item Initialize $x$ at a macro time $T$; 
      $x \in \M_k$ for some (unknown) $k$
      
    \item Simulate the fast-only reaction network ${\widetilde{Q}^{(k) } }(\boldsymbol{\alpha})$ until time-averages
      of observable $f$ and slow propensities $\lambda_{\beta}$ relax to steady-state :
      \begin{eqnarray*}
	      \frac{1}{t} \int_0^t f\left( 
	           {\widetilde{X}^{(k)} } (s) \right)ds &\to &
              \E_{{\widetilde{\pi}^{(k)}(\balpha) } }\left\{ f(\widetilde{X}) \right\} \equiv {\overline{f}(k) } \\
       \frac{1}{t} \int_0^t \lambda_{{\beta} } \left( 
       {\widetilde{X}^{(k)} } (s);\boldsymbol{{\beta} } \right) ds 
       &\to &
        \E_{{\widetilde{\pi}^{(k)}(\balpha) }  }\left\{ \lambda_{\beta}
        \left( \widetilde{X} ; \boldsymbol{\beta} \right) \right\} \\
        &\equiv & {\overline{\lambda}_{\beta }(k; \btheta) }
     \end{eqnarray*}
   \item Observe terminal state ${\widetilde{x}^{(k)} } \sim {\widetilde{\pi}^{(k)} }$.
     Compute ${\overline{\lambda}_{\beta_0} } = 
     \sum_{j=1}^{Ms} {\overline{\lambda}_{\beta_j} }$
   \item Sample time to next slow reaction : $\Delta T \sim \Exp( {\overline{\lambda}_{\beta_0} })$
   \item Sample next slow rxn to fire 
     ${\beta^* } 
     \sim 1/ {\overline{\lambda}_{\beta_0} } \left[ {\overline{\lambda}_{\beta_1} }, \dots, 
     { \overline{\lambda}_{\beta_{Ms}} } \right] $ 
   \item Update macro time $T\leftarrow T + \Delta T$ and move to the 
   next fast class
     by taking $x= { \widetilde{x}^{(k)} } + { \zeta_{\beta^*} }$
   \item Return to {\bf (1)} until macro time horizon $T_{final}$ is reached
  \end{enumerate}
\end{algorithm}

\subsection{Convergence and Error Bounds}
Here we use the above formulation of stiff networks to establish
convergence results and error bounds for the averaged process 
obtained by Algorithm~\ref{alg:TTS-SSA}.
They are largely obtained by applying results from Ref. \citen{yin_continuous-time_2013}
to the Two-Time-Scale Markov chain developed above.
We give the statements below and defer to the
Appendix for the proofs.

From the exact chain $X^\ep(t)$, 
define a stochastic process $\overline{X^\ep}(t)$ taking values in 
$\overline{\M}=\{ \M_1, \dots, \M_\NC \}$ by 
$\overline{X^\ep}(t) = \M_k $ for $X^\ep(t) \in \M_k$. 
Note that $\overline{X^\ep}(t)$ is not, in general, a Markov chain.
However, one expects that as $\ep \to 0$, the process
$\overline{X^\ep}(t)$ converges to $\overline{X}(t)$ in some sense.

\begin{proposition}[Weak Convergence]
\label{prop:weak_convergence}
Under Assumptions~\ref{assum:recurr_states} and \ref{assum:finite_states},
 as $\ep \to 0$, 
$\overline{X^\ep}(\cdot)$ converges weakly to 
$\overline{X} (\cdot)$ in the Skorohod space 
$\mathcal{D}([0,T]; \overline{\M} )$
for any time horizon $T$.
\end{proposition}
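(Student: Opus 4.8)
The plan is to establish the weak convergence through the martingale-problem characterization of Stroock and Varadhan, which is also the mechanism underlying the cited two-time-scale theory of Ref.~\citen{yin_continuous-time_2013}. Since $\overline{Q}$ generates a well-posed Markov chain on the finite set $\overline{\M}$, it suffices to show two things: (i) the family $\{\overline{X^\ep}\}_{\ep>0}$ is tight in $\mathcal{D}([0,T];\overline{\M})$, and (ii) every weak limit point solves the martingale problem for $\overline{Q}$ with the (deterministic) initial condition inherited from $X^\ep(0)$. Uniqueness for the finite-state $\overline{Q}$-martingale problem then pins down the limit as $\overline{X}$.

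For tightness I would exploit that $\overline{X^\ep}$ changes value \emph{only} when a slow reaction fires: the fast reactions $r_\alpha$ act within a single fast-class $\M_k$ and hence leave $\overline{X^\ep}$ fixed. Thus the jump rate of $\overline{X^\ep}$ is bounded by $\sup_x \sum_{\beta\in\bbeta}\lambda_\beta(x;\bbeta)$, which is finite (Assumption~\ref{assum:finite_states}) and, crucially, independent of $\ep$. The number of jumps on $[0,T]$ is therefore stochastically dominated by a single Poisson variable uniformly in $\ep$, so Aldous' criterion (the probability of a jump in a window of width $\delta$ is $O(\delta)$ uniformly) yields tightness.

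The heart of the argument is the identification of the limit via a perturbed-test-function (corrector) construction. Given $g:\{1,\dots,\NC\}\to\rr$, set $f_0=\widetilde{\one}g$, so $f_0$ is constant on each fast-class and $\widetilde{Q}f_0=\boldsymbol{0}$, killing the singular $1/\ep$ term. I then solve the block-wise Poisson equation $\widetilde{Q}f_1=\widetilde{\one}\,\overline{Q}g-\widehat{Q}f_0$; this is solvable by the Fredholm alternative because each $\widetilde{Q}^{(k)}$ is an irreducible generator (Assumption~\ref{assum:recurr_states}) whose left null space is spanned by $\widetilde{\pi}^{(k)}$, and the centering condition $\widetilde{\pi}\bigl(\widehat{Q}f_0-\widetilde{\one}\,\overline{Q}g\bigr)=\boldsymbol{0}$ holds by the definition of $\overline{Q}$ in \eqref{eq:Q-bar-matrix} together with $\widetilde{\pi}\,\widetilde{\one}=I$. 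Writing $f^\ep=f_0+\ep f_1$ and applying Dynkin's formula shows that
\begin{align*}
f^\ep(X^\ep(t))-f^\ep(X^\ep(0))-\int_0^t\left[(\overline{Q}g)(\overline{X^\ep}(s))+\ep\,(\widehat{Q}f_1)(X^\ep(s))\right]ds
\end{align*}
is a martingale. On the finite state space $f_1$ and $\widehat{Q}f_1$ are bounded, so both $\ep$-corrections vanish uniformly as $\ep\to0$, and any weak limit satisfies the $\overline{Q}$-martingale problem, completing the identification.

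I expect the main obstacle to be conceptual rather than computational: $\overline{X^\ep}$ is \emph{not} itself Markov, so one cannot pass directly to a generator limit. The corrector $f_1$ is exactly what absorbs the within-class fluctuations of $\widehat{Q}f_0$ — which depend on the microstate $x\in\M_k$, not merely on the fast-class index $k$ — and replaces them by the stationary average against $\widetilde{\pi}^{(k)}$. The mechanism legitimizing this replacement is the fast ergodicity of each block $\widetilde{Q}^{(k)}$; on the finite state space the requisite bounds on $f_1$ are automatic, whereas the infinite-state extension mentioned in the text would require uniform-in-$\ep$ control of the spectral gaps of the $\widetilde{Q}^{(k)}$ to keep the corrector bounded.
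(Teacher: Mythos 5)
Your proof is correct, but it takes a genuinely different route from the paper's. The paper proves tightness by bounding $\E\bigl[\overline{X^\ep}(s+t)-\overline{X^\ep}(s)\,\big|\,X^\ep(s)=x^{(k)}_j\bigr]$ using the zeroth-order singular perturbation bound \eqref{eq:SP-0_error}, and then identifies the limit by proving convergence of the \emph{finite-dimensional distributions}: it factors $\Prob\{\overline{X^\ep}(t_n)=\overline{y}_n,\dots,\overline{X^\ep}(t_1)=\overline{y}_1\}$ through the microstates via Chapman--Kolmogorov and applies the transition-probability error bound of Theorem~\ref{thm:prob_err_bound} to each factor. In other words, the paper piggybacks entirely on the quantitative expansion $p^\ep_t=\phi_0(t)+\psi_0(t/\ep)+O(\ep)$ already established, so weak convergence comes essentially for free once that machinery is in place. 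You instead give the classical averaging argument: Aldous tightness from the $\ep$-uniform bound on the slow jump rate (which is cleaner and more elementary than the paper's tightness step, and correctly exploits that $\overline{X^\ep}$ jumps only at slow-reaction times), plus identification of the limit through the $\overline{Q}$-martingale problem with the corrector $f^\ep=\widetilde{\one}g+\ep f_1$, where $f_1$ solves the block-wise Poisson equation whose solvability you correctly verify via $\widetilde{\pi}\,\widetilde{\one}=I$ and the definition \eqref{eq:Q-bar-matrix}. The trade-off: your argument is self-contained (it does not require Theorem~\ref{thm:prob_err_bound}) and, as you note, is the version that extends to infinite state spaces given uniform control of the correctors; the paper's argument is shorter given its Theorem~\ref{thm:prob_err_bound} and yields a convergence \emph{rate} as a byproduct, which the purely qualitative martingale route does not. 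The only points worth tightening in a written version are (i) the standard continuity argument needed to pass the time integral $\int_0^t(\overline{Q}g)(\overline{X^\ep}(s))\,ds$ through the Skorohod-weak limit (valid because the limit path is a.s.\ continuous at each fixed time), and (ii) a remark that $f_1$ is determined only modulo functions constant on fast-classes, any choice of which works.
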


The above proposition establishes weak convergence of the 
projection (onto fast-classes) of the exact system $X^\ep$ 
to  the averaged meta system $\overline{X}$ as $\ep \to 0$.
This is essentially the same result as established in 
Ref.~\citen{kang_separation_2013}, where the authors instead 
consider the disparity of the propensities as the system size
(molecule count) $N \to \infty$. In both formulations,
one selects a reference scale and then examines limit behavior
against the reference scale as the disparity increases 
($\ep \to 0$ or $N \to \infty$).
However, in practice one implements the averaging procedure to approximate
a system with a fixed, positive
scale disparity.
Naturally, one is then concerned about the induced error from the averaging
approximation.

Write $\overline{p}_T = \overline{p}_T(\overline{X} ; \balpha, \bbeta)$
for the probability measure (on $\overline{\M}$) induced by the averaged
process $\overline{X}$ at time $T$. 
At the end of a TTS simulation, one obtains a terminal state
$X(T)=x \sim p^0_T= p^0_T(X;\balpha, \bbeta)$, where $p^0_T$ is the
probability measure on $\M$ induced by the last state observed from the
terminal fast-class. Thus, $p^0_T$ is determined by $\overline{p}_T$ and 
$\widetilde{\pi}(\balpha)$.
Write $p^\ep_T=p^\ep(X^\ep; \balpha, \bbeta)$ for the probability measure
on $\M$ induced from the exact process $X^\ep$.
Since $p^\ep_T$ is the distribution we would see from an exact simulation,
and $p^0_T$ is the distribution from the TTS simulation, the question
becomes: What is the error of $p^0_T$ from $p^\ep_T$?
One can take a singular perturbation expansion of $p^\ep_T$ in terms
of $\ep$ and identify the leading term as $p^0_T$ to obtain the
following result.

\begin{theorem}[Error in Probability]
\label{thm:prob_err_bound}
Let $\widetilde{\kappa} = -\frac{1}{2} \max \left\{ 
\operatorname{Re}(\nu) : \nu \text{ is a non-zero eigenvalue of a }
\widetilde{Q}^{(k)} \right\} $. Then under Assumptions
\ref{assum:finite_states}, \ref{assum:recurr_states}, we have
\begin{align}
\begin{aligned}
  \label{eq:prob_err_bound}
\| p^0_T - p^\ep_T \| \le O\left( \ep
+ \exp\left\{ -\widetilde{\kappa} T/\ep \right\} \right)
\end{aligned}
\end{align}
where $\| \cdot \|$ denotes the $l_2$ norm.
\end{theorem}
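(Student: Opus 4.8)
The plan is to analyze the Kolmogorov forward (master) equation $\frac{d}{dt}p^\ep_t = p^\ep_t Q^\ep = p^\ep_t\bigl(\tfrac{1}{\ep}\widetilde{Q}(\balpha)+\widehat{Q}(\bbeta)\bigr)$ as a singularly perturbed linear system and to construct a matched asymptotic expansion in the spirit of Ref.~\citen{yin_continuous-time_2013}. Because of the $1/\ep$ factor, the solution splits into an \emph{outer} (regular) part that is smooth in $t$ and an \emph{initial-layer} part living on the stretched time $\tau=t/\ep$. First I would posit
\[
p^\ep_t = \phi_0(t)+\ep\,\phi_1(t)+\psi_0(t/\ep)+\ep\,\psi_1(t/\ep)+e^\ep_t,
\]
substitute into the forward equation, and collect powers of $\ep$ separately on the two scales.

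For the outer part, the order-$\ep^{-1}$ balance forces $\phi_0(t)\widetilde{Q}=0$, so $\phi_0$ lies in the left null space of the block-diagonal fast generator; using the stationary rows $\widetilde{\pi}^{(k)}$ this means $\phi_0(t)=\theta(t)\,\widetilde{\pi}(\balpha)$ for some $\theta(t)\in\rr^{1\times\NC}$. The order-$\ep^0$ balance $\dot\phi_0=\phi_0\widehat{Q}+\phi_1\widetilde{Q}$ is solvable for $\phi_1$ only if its right-hand side is orthogonal to the right null space of $\widetilde{Q}$, i.e.\ $(\dot\phi_0-\phi_0\widehat{Q})\widetilde{\one}=0$. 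Since $\widetilde{\pi}(\balpha)\widetilde{\one}=I_\NC$ and $\widetilde{\pi}\widehat{Q}\widetilde{\one}=\overline{Q}$ by \eqref{eq:Q-bar-matrix}, this solvability (Fredholm) condition collapses to $\dot\theta=\theta\overline{Q}$. Hence $\theta(t)=\overline{p}_t$ is exactly the law of the averaged process, and $\phi_0(t)=\overline{p}_t\,\widetilde{\pi}(\balpha)=p^0_t$, identifying the leading outer term with the TTS distribution.

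Next I would treat the initial layer. Rewriting the equation on the scale $\tau=t/\ep$, the leading balance gives $\frac{d}{d\tau}\psi_0=\psi_0\widetilde{Q}$, so $\psi_0(\tau)=\psi_0(0)\exp(\widetilde{Q}\tau)$, with $\psi_0(0)=p_0-\phi_0(0)$ chosen to match the full initial condition after setting $\theta(0)$ to the projection of $p_0$ onto fast-classes. The crucial observation is that this choice yields $\psi_0(0)\widetilde{\one}=0$, i.e.\ $\psi_0(0)$ lies in the transient subspace complementary to the null space of $\widetilde{Q}$; on that subspace $\exp(\widetilde{Q}\tau)$ contracts, and the definition of $\widetilde{\kappa}$ (half the spectral gap) is precisely what is needed to absorb any polynomial Jordan factors into the clean bound $\|\psi_0(\tau)\|\le C e^{-\widetilde{\kappa}\tau}$, whence $\|\psi_0(T/\ep)\|\le C e^{-\widetilde{\kappa}T/\ep}$. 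The same procedure produces $\psi_1$ with identical exponential decay.

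Finally, collecting the pieces at $t=T$ gives $p^\ep_T-p^0_T=\ep\phi_1(T)+\psi_0(T/\ep)+\ep\psi_1(T/\ep)+e^\ep_T$, so the stated bound follows once the remainder is shown to satisfy $\|e^\ep_T\|=O(\ep)$ uniformly on $[0,T]$. The main obstacle is precisely this remainder estimate: $e^\ep_t$ obeys a forward equation driven by $Q^\ep$ with a source of size $O(\ep)$, but the $1/\ep$ factor in $Q^\ep$ means a naive Gr\"onwall argument blows up. One must instead exploit the contraction of the fast semigroup on the transient subspace together with the solvability conditions built into $\phi_1,\psi_1$, so that the growing mode is suppressed; this $\ep$-uniform stability estimate for the perturbed semigroup $\exp(Q^\ep t)$ is the technical heart of the argument, and is where I would lean most heavily on the two-time-scale machinery of Ref.~\citen{yin_continuous-time_2013}.
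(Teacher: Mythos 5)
Your proposal follows essentially the same route as the paper: a matched singular-perturbation expansion of the master equation into regular terms $\phi_i(t)$ and boundary-layer terms $\psi_i(t/\ep)$, identification of $\phi_0(t)$ with $p^0_t$ via the Fredholm solvability condition that yields $\dot\theta=\theta\overline{Q}$, exponential decay $\|\psi_0(\tau)\|\le Ce^{-\widetilde{\kappa}\tau}$ from the spectral gap of $\widetilde{Q}$, and an $\ep$-uniform remainder estimate deferred to the two-time-scale machinery of Ref.~\citen{yin_continuous-time_2013} (Props.\ 4.24--4.28 there). The argument is correct and you have correctly located the technical heart in the remainder bound.
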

In Theorem~\ref{thm:prob_err_bound}, $\widetilde{\kappa}$ is the
slowest rate of convergence of $\widetilde{Q}^{(k)}$ 
to the steady-state $\widetilde{\pi}^{(k)}$ among all
fast-classes $\M_k$. Thus, as long as the macro time horizon $T$ is
large enough to ensure the fast dynamics have relaxed to steady state
($T> -\ep/\widetilde{\kappa} \log(\ep)$),
then the error becomes $\|p^0_T - p^\ep_T \| \le O(\ep)$.

Writing $\pi^0(\balpha, \bbeta)$ for the stationary distribution 
corresponding to the TTS probability measure $p^0_T(\balpha, \bbeta)$,
it is not hard to see that $\pi^0 (\balpha, \bbeta) =
\overline{\pi} \cdot \widetilde{\pi}$,
the product of the steady-state distribution between fast-classes 
$\overline{\pi}(\balpha, \bbeta)$
and the steady-state distribution within fast-classes
$\widetilde{\pi}(\balpha)$.
Write $\pi^\ep$ for the steady-state distribution corresponding to
the exact process generated by $Q^\ep$.
Then using Theorem~\ref{thm:prob_err_bound} and exponential
convergence to the steady state, we obtain the following error bounds.

\begin{corollary}[Error in Expectation]
\label{cor:expectation-error}
Under Assumptions \ref{assum:finite_states} and
\ref{assum:recurr_states},
$\| \pi^0 - \pi^\ep \| \le O(\ep)$ and 
$\| \pi^0 - p^\ep_T \| \le O(\ep)$ 
for sufficiently large $T$. Thus, 
for all bounded functions $f$ on the state space $\M$, 
\begin{align} 
\begin{aligned}
\label{eq:error_expectation}
  \left| \E_{\overline{p}_T} \left\{ \overline{f}(\overline{X}(T)) \right\}\right.
    & - \left. \E_{p^\ep_T} \left\{ f(X^\ep(T)) \right\} \right|
   \le   \\
    & \|f\|_\infty \| p^0_T - p^\ep_T \| \le O(\ep) \\
  \left| \E_{\overline{\pi}} \left\{ \overline{f}(\overline{X}) \right\}\right.
   & - \left. \E_{\pi^\ep} \left\{ f(X^\ep) \right\} \right|
   \le   \\
   & \|f\|_\infty \| \pi^0 - \pi^\ep \| \le O(\ep)
   \end{aligned}
\end{align}
\end{corollary}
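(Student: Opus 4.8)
The plan is to reduce everything to Theorem~\ref{thm:prob_err_bound} together with a single change-of-variables identity. First I would observe that, by the definition of $p^0_T$ as the product measure $p^0_T = \overline{p}_T \cdot \widetilde{\pi}(\balpha)$ on $\M$ and the definition \eqref{eq:fbar_defn} of $\overline{f}$, one has the exact identity
\begin{equation*}
\E_{\overline{p}_T}\left\{ \overline{f}(\overline{X}(T)) \right\}
= \sum_{k=1}^{\NC} \overline{p}_T(\M_k) \sum_{x \in \M_k} f(x)\, \widetilde{\pi}^{(k)}_x(\balpha)
= \E_{p^0_T}\left\{ f(X) \right\},
\end{equation*}
so that the expectation of $\overline{f}$ against the macro measure is literally the expectation of $f$ against the full-space measure $p^0_T$. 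The same identity holds with $\overline{p}_T, p^0_T$ replaced by $\overline{\pi}, \pi^0$. This collapses the corollary into a statement purely about the distance between the two full-space measures.

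Next I would bound the difference of expectations by H\"older's inequality: for any bounded $f$,
\begin{equation*}
\left| \E_{p^0_T}\{f\} - \E_{p^\ep_T}\{f\} \right|
\le \|f\|_\infty \, \| p^0_T - p^\ep_T \|_1
\le \sqrt{m}\, \|f\|_\infty \, \| p^0_T - p^\ep_T \|,
\end{equation*}
where the last step uses equivalence of the $l_1$ and $l_2$ norms on the finite space $\M$ (Assumption~\ref{assum:finite_states}); the factor $\sqrt{m}$ is a fixed constant absorbed into the $O(\ep)$. Combining this with the identity above and Theorem~\ref{thm:prob_err_bound} yields the first line of \eqref{eq:error_expectation}, once $T$ is taken large enough that $\exp\{-\widetilde{\kappa}T/\ep\} \le \ep$, i.e. $T \ge -(\ep/\widetilde{\kappa})\log\ep$.

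For the stationary statements I would pass to the limit $T \to \infty$. Under Assumptions~\ref{assum:finite_states}--\ref{assum:recurr_states} both chains are ergodic, so at fixed $\ep$ we have $p^0_T \to \pi^0$ and $p^\ep_T \to \pi^\ep$. Since the bound of Theorem~\ref{thm:prob_err_bound} holds for every $T$, taking $T \to \infty$ gives
\begin{equation*}
\| \pi^0 - \pi^\ep \| = \lim_{T\to\infty} \| p^0_T - p^\ep_T \|
\le \limsup_{T\to\infty} O\!\left( \ep + e^{-\widetilde{\kappa}T/\ep} \right) = O(\ep).
\end{equation*}
The mixed bound then follows from the triangle inequality $\| \pi^0 - p^\ep_T \| \le \| \pi^0 - \pi^\ep \| + \| \pi^\ep - p^\ep_T \|$, the second term decaying exponentially by ergodicity of $X^\ep$ and hence being $\le O(\ep)$ for $T$ sufficiently large. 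Feeding both bounds back through the H\"older estimate produces the second line of \eqref{eq:error_expectation}.

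The main technical point to watch is the uniformity of the implied constant in the $O(\ep)$ term of Theorem~\ref{thm:prob_err_bound} as $T \to \infty$: the interchange in the displayed limit is legitimate only if that constant does not grow with $T$. I would address this by inspecting the singular perturbation expansion underlying Theorem~\ref{thm:prob_err_bound} and verifying that its $\ep$-coefficient is bounded uniformly in $T$, which should hold because the leading outer-expansion terms are governed by the macro generator $\overline{Q}$, whose relaxation rate is $O(1)$ in $\ep$. A secondary bookkeeping issue is confirming that the macro relaxation rate controlling $\| \pi^\ep - p^\ep_T \|$ is bounded below uniformly in $\ep$, so that a single choice $T = O(\log(1/\ep))$ simultaneously kills the transient term $e^{-\widetilde{\kappa}T/\ep}$ and the exact-chain relaxation while keeping the averaging error at $O(\ep)$.
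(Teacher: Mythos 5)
Your proposal is correct and follows essentially the same route as the paper: reduce everything to Theorem~\ref{thm:prob_err_bound}, then combine ergodicity with the triangle inequality. Your opening identity $\E_{\overline{p}_T}\{\overline{f}(\overline{X}(T))\} = \E_{p^0_T}\{f(X)\}$, which the paper leaves implicit, is exactly the right way to turn the macro-expectation into a full-space expectation, and the Cauchy--Schwarz step with the $\sqrt{m}$ factor between the $l_1$ and $l_2$ norms is a harmless detail the paper glosses over. The one place you genuinely diverge is in proving $\|\pi^0 - \pi^\ep\| \le O(\ep)$: you send $T \to \infty$ in the bound of Theorem~\ref{thm:prob_err_bound}, which obliges you to verify that the implied constant is uniform in $T$ --- a point you correctly flag but leave open. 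The paper sidesteps this entirely by working at a single finite $T \ge \max\{T^\ep, \widetilde{T}, \overline{T}\}$ chosen so that the exact chain has relaxed ($\|p^\ep_T - \pi^\ep\| \le \ep$), the boundary layer has decayed ($\|\psi_0(T/\ep)\| \le \ep$), and the macro chain has relaxed ($\|\overline{p}_T - \overline{\pi}\| \le \ep$, hence $\|\phi_0(T) - \overline{\pi}\widetilde{\pi}\| \le \ep$); a three-term triangle inequality then yields $\|\pi^\ep - \overline{\pi}\widetilde{\pi}\| \le O(\ep)$ using the expansion error \eqref{eq:SP_error} only at that one fixed time. Since that $T$ stays bounded as $\ep \to 0$ (the slow spectral gap of $Q^\ep$ converges to that of $\overline{Q}$ --- your ``secondary bookkeeping issue,'' which applies equally to the paper's argument), the fixed-$T$ version is the cleaner of the two: your limiting argument buys nothing extra and costs you the uniformity check.
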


Corollary \ref{cor:expectation-error} is of great practical use,
as it says that the expected value of the macro-process $\overline{X}(T)$ 
with macro-observable $\overline{f} :\overline{\M} \to \rr$
provides an $O(\ep)$ estimate of the expected value of the exact system
$X^\ep(T)$ with observable $f : \M \to \rr$.
Since we can use TTS algorithms (such as Algorithm \ref{alg:TTS-SSA})
to quickly generate trajectories of $\overline{X}(T)$ while estimating the 
macro-observable $\overline{f}(\M_k)$ at each state along the way, this
provides a method to very quickly generate estimates of 
$\E_{p^\ep_T} \left\{ f(X^\ep(T)) \right\}$ with at most $O(\ep)$ bias.
As $\ep \to 0$, the bias decreases linearly while the computational savings
increase as $O(1/\ep)$.

\section{Two Time Scale Sensitivity Analysis}
\label{sec:TTS_sens}

Computing the system sensitivities
$S_{f,T}(\theta_i) \defd \frac{\partial}{\partial \theta_i} \E_\btheta \left\{ f(X(T)) \right\}$
with respect to reaction parameters $\theta_i \in \btheta$ 
provides great insight into the model.
As such, numerous works have 
constructed and analyzed methods to estimate
the sensitivities from sample trajectories of the system
\cite{gupta_efficient_2014, rathinam_efficient_2010, 
sheppard_pathwise_2012, wolf_hybrid_2015, wang_efficiency_2014, 
nunez_steady_2015, gupta_sensitivity_2013, 
pantazis_parametric_2013, warren_steady-state_2012}.

Different methods work better for different systems or different criteria,
but all methods have higher (sometimes stupendously higher) variance
in the estimation of 
$S_{f,T}(\theta_i)$
compared to the estimation of $ \E_{\btheta}\left\{ f(X(T)) \right\} $,
thus requiring a very large number of samples to estimate the sensitivity
precisely.
If the system is stiff (as in \eqref{eq:Q-ep_decomp})
so that each exact trajectory $X^\ep(T)$
requires a prohibitively
large computational load, then the large number of 
sample paths required to estimate the sensitivity
$S_{f,T}^\ep(\theta_i) \defd 
\partial_{\theta_i} \ \E_{p^\ep_T(\btheta)} 
\left\{ f(X^\ep(T)) \right\}$
make the problem computationally intractable.

{
Corollary~\ref{cor:expectation-error}
gives that the expectation of 
macro ``averaged'' reaction network $\overline{f}(\overline{X}(T))$
gives an accurate approximation of the expectation of
the exact network; 
$\E_{\overline{p}_T(\btheta)} 
\left\{ \overline{f} (\overline{X}(T)) \right\}
= \E_{p^\ep_T(\btheta)}
\left\{ f(X^\ep(T)) \right\} + O(\ep)$.
A natural question to ask is whether the sensitivities
of the exact system converge to the 
sensitivities of the averaged system.
Using the recent result of Ref. \citen{gupta_sensitivity_2014},
we can derive the following (the details are deferred to Appendix).

\begin{proposition}{Convergence of Sensitivities}
\label{prop:sens_converge}
\begin{multline}
\label{eq:TTS_sens_estimate}
\lim_{\ep \to 0} 
S_{f,T}^\ep(\theta_i)
=
\overline{S}_{f,T} (\theta_i) \defd
\frac{\partial}{\partial \theta_i} 
\E_{\overline{p}_T(\btheta)} 
\left\{ \overline{f} (\overline{X}(T)) \right\} 
\end{multline}
\end{proposition}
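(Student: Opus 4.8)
The plan is to represent the sensitivity as the pairing of the observable with the $\theta_i$-derivative of the probability measure, and then to push the singular-perturbation expansion underlying Theorem~\ref{thm:prob_err_bound} through this derivative. Writing $p^\ep_T$ as a row vector evolving by the forward equation $\tfrac{d}{dT} p^\ep_T = p^\ep_T Q^\ep(\btheta)$, the sensitivity measure $\psi^\ep_T \defd \partial_{\theta_i} p^\ep_T$ satisfies the inhomogeneous equation $\tfrac{d}{dT}\psi^\ep_T = \psi^\ep_T Q^\ep + p^\ep_T\,(\partial_{\theta_i} Q^\ep)$, and $S^\ep_{f,T}(\theta_i) = \psi^\ep_T \cdot f$. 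The goal is then to show $\psi^\ep_T \cdot f \to \partial_{\theta_i}\big(\overline{p}_T \cdot \overline{f}\big)$. Rather than redoing the full asymptotics by hand, I would invoke the reduced-model sensitivity convergence result of Ref.~\citen{gupta_sensitivity_2014} as the main engine, reserving the self-contained singular-perturbation argument (via Ref.~\citen{yin_continuous-time_2013}, as in the proof of Theorem~\ref{thm:prob_err_bound}) to identify the limit.

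First I would separate the two ways $\theta_i$ enters $Q^\ep = \tfrac{1}{\ep}\widetilde{Q}(\balpha) + \widehat{Q}(\bbeta)$. For a slow parameter $\beta_j$ only $\widehat{Q}$ is perturbed, so $\partial_{\beta_j} Q^\ep = \partial_{\beta_j}\widehat{Q}$ is $O(1)$ and independent of $\ep$; moreover $\widetilde{\pi}(\balpha)$ and hence $\overline{f}$ do not depend on $\beta_j$, so $\overline{S}_{f,T}(\beta_j) = (\partial_{\beta_j}\overline{p}_T)\cdot\overline{f}$. For a fast parameter the rescaling $\theta_i = \alpha_i/\ep$ produces a $\tfrac{1}{\ep}$ prefactor in $\partial_{\theta_i} Q^\ep$, but the stationary vectors $\widetilde{\pi}^{(k)}$ are invariant under positive scalar multiplication of $\widetilde{Q}^{(k)}$ and the mass-action generator is linear in the rates, so this prefactor is absorbed and $\widetilde{\pi}^{(k)}$ depends smoothly on the parameter; here both $\overline{f}$ and $\overline{Q}$ vary, and the product rule gives $\overline{S}_{f,T}(\theta_i) = (\partial_{\theta_i}\overline{p}_T)\cdot\overline{f} + \overline{p}_T\cdot(\partial_{\theta_i}\overline{f})$. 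In either case I would then feed the perturbed generator $\partial_{\theta_i} Q^\ep$ into the two-time-scale asymptotic expansion and extract the leading term of the solution $\psi^\ep_T$ of the sensitivity ODE above.

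The limit is identified by matching this leading term against $\partial_{\theta_i}(\overline{p}_T \cdot \overline{f})$ using the product decomposition $\pi^0 = \overline{\pi}\cdot\widetilde{\pi}$ from the discussion preceding Corollary~\ref{cor:expectation-error}: the between-class dynamics of $\psi^\ep_T$ collapse onto the averaged generator $\overline{Q}$, reproducing $(\partial_{\theta_i}\overline{p}_T)\cdot\overline{f}$, while the within-class response of $\widetilde{\pi}(\balpha)$ to a fast parameter reproduces the $\overline{p}_T\cdot(\partial_{\theta_i}\overline{f})$ term. The main obstacle is the interchange $\lim_{\ep\to0}\partial_{\theta_i} = \partial_{\theta_i}\lim_{\ep\to0}$, i.e. showing that the $O(\ep)$ remainder in Corollary~\ref{cor:expectation-error} stays $o(1)$ after differentiation in $\theta_i$. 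This requires uniform-in-$\theta$ control of the expansion, which in turn reduces to smooth (indeed analytic) dependence of the null-space projections $\widetilde{\pi}^{(k)}(\balpha)$ and of the spectral gap $\widetilde{\kappa}$ on the reaction parameters, together with uniform bounds on the associated resolvents — exactly the regularity guaranteed under Assumptions~\ref{assum:finite_states}--\ref{assum:recurr_states} and supplied by the result of Ref.~\citen{gupta_sensitivity_2014}, which is why I would lean on it to license the interchange.
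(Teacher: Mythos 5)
You correctly identify Ref.~\citen{gupta_sensitivity_2014} as the engine, which is indeed what the paper uses, but your proposal has a genuine gap: it never supplies the step that actually constitutes the paper's proof, namely the \emph{translation} between the two multiscale frameworks. Theorem 3.2 of that reference is stated for a scaled family $X^N_{\gamma,\theta}$ whose reduced model $\hat{X}_\theta$ lives on the projected space $\Pi_2\M$, with averaged rates and observables defined by integration over ``remainder spaces'' $\mathbb{H}_v$. To conclude that the limit it produces equals $\overline{S}_{f,T}(\theta_i)$ as defined on the fast-class space $\overline{\M}$, one must (i) identify $X^\ep$ with $X^N_{0,\theta}$ under $N=1/\ep$, $\rho_{fast}=1$, $\rho_{slow}=0$, and (ii) prove that the remainder spaces $\mathbb{H}_v$ are in bijective correspondence with the fast-classes $\M_l$, so that $\pi^v_\theta=\widetilde{\pi}^{(l)}_\theta$, $\hat{\lambda}_k(v,\theta)=\overline{\lambda}_k(\M_l,\theta)$ and $f_\theta(v)=\overline{f}(\M_l,\theta)$. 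Step (ii) is the nontrivial content: the paper proves a lemma showing $\Pi_2$ is constant on each fast-class and that two states with the same projection must communicate via fast reactions (an argument that uses integrality of the stoichiometric coefficients). Your proposal treats the citation as merely ``licensing an interchange of limits,'' which is not what that theorem does and leaves the identification of the limit object unjustified.

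The alternative route you sketch --- writing $\psi^\ep_T=\partial_{\theta_i}p^\ep_T$, deriving its inhomogeneous forward equation, and pushing the singular-perturbation expansion of Theorem~\ref{thm:prob_err_bound} through the derivative --- is a reasonable idea but is not carried out, and the paper explicitly flags it as open: establishing convergence (and its rate) via singular-perturbation expansions of the sensitivity reweighting measures is described as ongoing work. The obstacle you name, uniform-in-$\btheta$ control of the expansion and smooth dependence of $\widetilde{\pi}^{(k)}(\balpha)$ and the spectral data on the parameters, is real, but it is not ``supplied by'' Ref.~\citen{gupta_sensitivity_2014}; that reference proves the convergence statement directly by coupling arguments in its own framework rather than by validating term-by-term differentiation of an $O(\ep)$ expansion. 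So as written the proposal is circular where it is not incomplete: either you must do the asymptotic analysis of $\psi^\ep_T$ yourself (including the uniform estimates), or you must do the fast-class/remainder-space equivalence so that the cited theorem applies verbatim. The paper does the latter.
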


Thus, if we can compute the sensitivity of the 
macro reaction network $\overline{f}(\overline{X}(T))$ 
(whose sample paths have orders of magnitude less cost to simulate than the exact stiff network $f(X^\ep(T))$), 
then this provides an 
accurate estimate of 
the exact sensitivity.
Furthermore, since $\overline{X}(\cdot)$ is formulated as a
reaction network with propensities
$\{ \overline{\lambda}_\beta(x, \btheta) : \beta \in \bbeta \}$
and observable values $\overline{f}(\M_k)$
(both of which are estimated during a TTS simulation), 
we can apply most 
of existing single-scale 
sensitivity estimation methods to estimate 
$
\overline{S}_{f,T}(\theta_i)
$
 and thus
$
S_{f,T}^\ep(\theta_i)
$.

We note that \eqref{eq:TTS_sens_estimate} 
gives that the sensitivity of the exact system
converges to the sensitivity of the averaged system,
but does not give the rate of convergence.
Currently, this is an open question.
Since from \eqref{eq:error_expectation} we have 
the expectation converges at a rate 
$O(\ep)$, one might suspect that the sensitivity 
also converges at rate $O(\ep)$, at least for certain classes
of networks (e.g. linear propensities).
Ongoing work aims to establish the rate
of convergence via singular perturbation expansions
of sensitivity reweighting measures. 
However, the remainder of this work 
shall focus on the development and 
practical implementation of a 
multiscale Likelihood Ratio estimator of the
limit sensitivity $\overline{S}_{f,T}(\theta_i)$.

}
In what follows, we review the Likelihood Ratio
method for computing system
sensitivities for single-scale reaction networks.
Furthermore, we shall introduce a new Ergodic Likelihood Ratio
method which has much smaller variance when estimating sensitivities
at steady-state.
We then derive a Two-Time-Scale version that 
allows one to estimate the full gradient of a stiff
system using any TTS Monte Carlo method for 
simulating a macro trajectory.

\subsection{Likelihood Ratio Methods}
\label{sec:LRmethods}
Likelihood Ratio (LR) methods
\cite{plyasunov_efficient_2007, wang_efficiency_2014,  
warren_steady-state_2012, nunez_steady_2015,
glynn_likelihood_1990, mcgill_efficient_2012}
(aka the Girsanov Transform Method)
attempt to compute the derivative by reweighting the observed trajectory
by its ``score'' function of the density.
Here, one views $\btheta$ as parametrizing the probability
measure on the path-space $P(\cdot, t ; \btheta)$.
If $P(\cdot, t ; \btheta)$ is differentiable with respect to $\theta_i$,
then under mild regularity conditions we have 
\begin{multline}
\label{eq:LR_reweight}
    S_{f,t}(\theta_i) \defd
    \frac{\partial}{\partial \theta_i} \E_{\boldsymbol{\theta^0}}
    \left\{ f(X(t)) \right\} \\
    = \int_{\Omega} f(X(t,\omega)) 
    \frac{\frac{\partial}{\partial \theta_i} \big|_{\boldsymbol{\theta^0} }
    P(d\omega, t; \boldsymbol{\theta})}{P(d\omega, t ;\boldsymbol{\theta}) }
    P(d\omega, t; \boldsymbol{\theta^0}) \\
    = \E_{\boldsymbol{\theta^0}} \left\{ f(X(t)) W_{\theta_i}(t) \right\}
  \end{multline}
Using the random-time-change representation \eqref{eq:random-time-change}
it can be shown\cite{plyasunov_efficient_2007}
that the reweighting process $W_{\theta_i}(t)$ 
is a zero-mean martingale process and
can be represented by
\begin{multline}
\label{eq:W_representation}
  W_{\theta_i} (t) = \sum_{r=1}^M \int_0^t \frac{ 
    \frac{\partial \lambda_r}{\partial \theta_i} \left( X(s^-), \boldsymbol{\theta^0}
  \right) }{\lambda_r\left( X(s^-), \boldsymbol{\theta^0} \right)} dR_r(s) \\
  - \sum_{r=1}^M \int_0^t \frac{\partial \lambda_r}{\partial \theta_i} 
  \left( X(s^-), \boldsymbol{\theta^0} \right) ds ,
\end{multline}
where $dR_r(s)$ is simply the counting measure of reaction $r$ which 
equals $1$ at times $s$ at which reaction $r$ fires and is zero otherwise.
Thus, assuming one can compute 
$\partial_{\theta_i} \ \lambda_r(x,\btheta)$, then
$W_{\theta_i}(t)$ has a computationally tractable form as follows.

We write $\hat{X}_l$ for the $l$-th state of the system through a trajectory, 
and $\Delta_l$ for the holding time in the $l$-th state.
Write $T_l$ for the time of the $l$ jump, so that
$T_l = \sum_{j=0}^{l-1} \Delta_j$.
We denote $N(t)$ as the total number of reactions which have fired by time $t$
and $r^*_l$ for the index of the reaction which takes the 
system from the $l$-th state to the $l+1$-th state.
Then
$W_{\theta_i}$ has 
the explicit form

\begin{multline*}
W_{\theta_i} (t) = \\
\sum_{l=0}^{N(t)-1} \left[
    \frac{\partial }{\partial \theta_i} 
        \log{\lambda_{r^*_l}\left( \hat{X}_l, \boldsymbol{\theta^0}\right) }
  - \sum_{r=1}^M \frac{\partial \lambda_r}{\partial \theta} 
  \left( \hat{X}_l, \boldsymbol{\theta^0} \right) \Delta_l 
\right] \\
  - \sum_{r=1}^M  \frac{\partial \lambda_r}{\partial \theta} 
  \left( \hat{X}_l, \boldsymbol{\theta^0} \right) 
\left[ T - T_{N(T)} \right] .
\end{multline*}

In simulation, the LR estimate is
computed via ensemble averages 
estimated by empirical averaging
$S_{f,T}(\theta_i)
     \approx \widehat{ \operatorname{\bf LR}} (\NS,\theta_i)$
     with the empirical estimator
\begin{multline}
	\label{eq:LR-defn}
\widehat{ \operatorname{\bf LR}} (\NS,\theta_i) \defd 
    \frac{1}{\NS} \sum_{n=1}^{\NS} 
    \widehat{\left[ f(x(T)) \right]}_n
    \widehat{\left[ W_{\theta_i}(T) \right]}_n .
\end{multline}
where $\NS$ is the number sample paths,
$\widehat{\left[ f(x(T)) \right]}_n$ is the observable value at 
terminal time $T$ 
for the $n$th sample path,
and similarly $\widehat{[W_{\theta_i}(T)]}_n$ is the terminal value of
$W_{\theta_i}(T)$ for the $n$th sample path.
While the reweighting process $W_{\theta_i}(t)$ has zero mean,
its variance grows with time 
\cite{wang_efficiency_2014, 
warren_steady-state_2012}, 
making it quite inefficient for large time horizons. 
The variance can be reduced by using the centered 
likelihood ratio estimate
\begin{multline}
	\label{eq:CLR-defn}
    \widehat{\operatorname{\bf CLR}}
    (\NS, \theta_i)
     \defd
    \widehat{\operatorname{\bf LR}}(\NS, \theta_i) \\
    - \frac{1}{\NS^2} 
    \left\{ \sum_{n=1}^\NS
    \widehat{\left[ f(x(T)) \right]}_n
  \right\}
    \left\{ \sum_{n=1}^\NS
    \widehat{\left[ W_{\theta_i}(T) \right]}_n 
  \right\} .
\end{multline}
Since the $\E_{\btheta} \{ W_{\theta_i}(T) \} \equiv 0$, the
second term doesn't impose any bias into
the estimate \eqref{eq:LR_reweight}, but is coupled to
the first term to reduce the observed variance
\cite{wang_efficiency_2014}.

Suppose one is interested in the steady-state sensitivities,
$ 
S_{f,\infty}(\theta_i) \defd
\partial_{\theta_i}
\E_{\pi(\btheta)} 
\left\{ f(X) \right\}
$.
It is well known that 
$\E_{p_T(\btheta)} \left\{ f(X(T)) \right\} =
\E_{\pi(\btheta)} \left\{ f(X) \right\} + 
O\left( e^{-\kappa T} \right)$ for some mixing rate $\kappa$,  
and thus for large $T$
 one can use the terminal distribution of $f(X(T))$ and $W_{\theta_i}(T)$
in \eqref{eq:LR_reweight} to obtain an estimate of the steady-state sensitivity
with exponentially small bias \cite{warren_steady-state_2012}.
However, the major difficulty in using likelihood ratio estimates is
the large variance of the estimator $f(X(T)) W_{\theta_i}(T)$, 
which is proportional to $\Var\{ f(X(T)) \} \Var\{W_{\theta_i}(T)$ 
\cite{wang_efficiency_2014, plyasunov_efficient_2007}.
It can be seen that $\Var\left\{ W_{\theta_i }(T) \right\} = O(T)$,
so one must balance choosing a terminal time $T$ large enough to
ensure sufficient decay of the bias 
$\E_{p_T(\btheta)}\left\{ f(X(T)) \right\} - 
\E_{\pi(\btheta)} \left\{ f(X) \right\} $, yet as small as possible to 
contain the growth of the $\Var\left\{ W_{\theta_i}(T) \right\}$.
While centering as in \eqref{eq:CLR-defn} helps to reduce the variance of
the estimator, the variance is usually much larger than comparable 
finite difference of pathwise derivative methods 
\cite{wolf_hybrid_2015,wang_efficiency_2014,sheppard_pathwise_2012}.

Instead of using the terminal distribution 
$f(X(T))$ as an approximation of the steady-state distribution, 
one could instead use the ergodic-average (time-average)
$1/T \int_0^T f(X(s)) ds $.
The bias of the ergodic-average decays slower than the terminal distribution
($O(1/T)$ compared to $O(e^{-\kappa T})$), 
but has the advantage that variance decays
with time as well; that is, 
$\Var\left\{ 1/T \int_0^T f(X(s)) ds \right\} = O(1/T)$
whereas $\Var\left\{ f(X(T)) \right\} \to \Var\left\{ f(X(\infty)) \right\}
= \sigma^2$ (see Ref. \citen{asmussen_stochastic_2007} for more details).

Motivated by the variance reduction one obtains with ergodic averaging,
we introduce a new method for computing likelihood-ratio type 
steady state sensitivity estimates.
The idea is to simply replace the terminal-state observable 
$f(X(T))$ with the ergodic average $1/T \int_0^T f(X(s)) ds$ in 
the LR scheme \eqref{eq:LR_reweight} -- \eqref{eq:LR-defn}.
The philosophy is that
by incurring some small amount of additional bias in the mean value, 
the ergodic steady-state sensitivity estimate
has much smaller variance than the terminal-state distribution.
We shall refer to this method the {\em ergodic likelihood ratio},
\begin{multline}
\label{eq:ELR_defn}
    \widehat{ \operatorname{\bf ELR}} (\NS,\theta_i) \defd \\
    \frac{1}{\NS} \sum_{n=1}^{\NS} 
    \frac{1}{T}
    \widehat{\left[ \int_0^T f(x(s))ds \right]}_n
    \widehat{\left[ W_{\theta_i}(T) \right]}_n .
\end{multline}
Similarly, one can center the {\bf ELR} to derive the 
{\em centered ergodic likelihood ratio} {\bf CELR},
\begin{multline}
	\label{eq:CELR-defn}
    \widehat{\operatorname{\bf CELR}}
    (\NS, \theta_i)
     \defd
    \widehat{\operatorname{\bf ELR}}(\NS, \theta_i) \\
    - \frac{1}{\NS^2} 
    \left\{ \sum_{n=1}^\NS
    \frac{1}{T}
    \widehat{\left[ \int_0^T f(x(s))ds \right]}_n
  \right\}
    \left\{ \sum_{n=1}^\NS
    \widehat{\left[ W_{\theta_i}(T) \right]}_n 
  \right\} .
\end{multline}

In the numerical experiments, it shall be seen that the {\bf CELR } method performs much better than the {\bf CLR} for steady-state sensitivity estimation.

\subsection{TTS Likelihood Ratio}

In what follows, we describe how 
the above single-scale 
Likelihood Ratio methods
can be adapted
to the macro-process $\overline{X}(T)$
for use in
\eqref{eq:TTS_sens_estimate}.
Recall that the reaction parameters can be 
classified as fast or slow,
$\btheta=[\balpha, \bbeta]$ with
$\balpha=[\alpha_1, \dots, \alpha_{M_f} ]$ and 
$\bbeta=[\bbeta_1, \dots, \bbeta_{M_s} ]$.
To apply Likelihood Ratio methods to
compute $\partial_{\theta_i}  \E_{\btheta} 
\left\{ \overline{f}(\overline{X}(T) \right\}$,
we exploit that the macro process $\overline{X}(T)$
is identified as reaction network with propensities
\begin{multline*}
   \overline{\lambda}_{\beta_r}(\overline{X}; \balpha, \bbeta)
   = \E_{\widetilde{\pi}^{(\overline{X})}(\balpha)}
    \left\{ \lambda_{\beta_r}(X;\bbeta) \right\} \\
    = \sum_{x \in \M_{\overline{X}}}
    \lambda_{\beta_r}(x;\bbeta) 
    \widetilde{\pi}^{(\overline{X})}(x;\balpha)
\end{multline*}
(for $\beta_r \in \bbeta$), and observable
\begin{align*}
  \begin{aligned}
    \overline{f}(\overline{X}; \balpha)
    =\E_{\widetilde{\pi}^{(\overline{X})}(\balpha)}
    \left\{ f(X) \right\} 
    = \sum_{x \in \M_{\overline{X}}}
    f(x) \widetilde{\pi}^{(\overline{X})}(x;\balpha)
  \end{aligned}
\end{align*}
Thus the macro-sensitivities can be represented by
\begin{multline}
  \label{eq:TTS_LR}
  \frac{\partial}{\partial \theta_i}
  \E_{\overline{p}_T(\btheta)}
  \left\{ \overline{f}\left( \overline{X}(T);\btheta \right)  \right\} \\
  = \E_{\overline{p}_T(\btheta)} 
  \left\{ \frac{\partial}{\partial \theta_i}
  \overline{f}\left( \overline{X}(T);\btheta \right) 
  + \overline{f}\left( \overline{X};\btheta \right)
  \overline{W}_{\theta_i} (T) \right\}
\end{multline}
where the macro-reweighting process 
$\overline{W}_{\theta_i}(T)$ is given by
\begin{multline}
    \label{eq:TTS_W-rep}
    \overline{W}_{\theta_i} =
    \sum_{r=1}^{M_s} \int_0^T
    \frac{
    \frac{\partial}{\partial \theta_i} 
    \overline{\lambda}_{\beta_r}
    \left( \overline{X}(s);\btheta \right)
  }{ 
    \overline{\lambda}_{\beta_r} \left( \overline{X}(s);\btheta \right)
  } dR_{\beta_r}(s) \\
  - \sum_{r=1}^{M_s} \int_0^T
  \frac{\partial}{\partial \theta_i}
  \overline{\lambda}_{\beta_r}\left( 
  \overline{X}(s);\btheta \right) ds .
\end{multline}
Therefore, in order to apply \eqref{eq:TTS_LR}
we need to be able to compute the derivatives of the
averaged observable
$\partial_{\theta_i} \ \overline{f}\left( \overline{X}(s); \btheta \right)$
as well as the derivatives of the averaged propensity functions
$\partial_{\theta_i} \ \overline{\lambda}_{\beta_r}\left( \overline{X}(s);\btheta \right)$.

Suppose $\theta_i = \beta_i \in \bbeta$ is a slow reaction parameter.
If the original observable $f(X)$ has no direct parameter dependence,
then 
$\partial_{\beta_i} \ \overline{f}(\overline{X}(s),\btheta) \equiv 0$.
Furthermore, under mass-action kinetics, the averaged propensities 
have 
$\partial_{\beta_i} \ \overline{\lambda}_{\beta_r}(\overline{X}(s); \btheta)
= \overline{b}_{\beta_r} (\overline{X};\balpha) \delta_{i,r}$,
where $\overline{b}_{\beta_r}(\overline{X};\balpha) = 
1/\beta_r \cdot \overline{\lambda}_{\beta_r}(\overline{X};\balpha)$
is already computed during a TTS simulation 
and $\delta_{i,r}=1$ if $i=r$ and $0$ otherwise.
Thus the slow sensitivities are directly computable from
a standard TTS simulation.

Suppose $\theta_i=\alpha_i \in \balpha$ is a fast reaction parameter.
Then computing $\partial_{\alpha_i}  \overline{f}(\overline{X}(s);\balpha)$
and $\partial_{\alpha_i} \overline{\lambda}_{\beta_r}(\overline{X}(s); \balpha, \bbeta)$
is more problematic, as they only depend indirectly on $\balpha$ through the 
fast-class steady-state measures $\widetilde{\pi}(\balpha)$.
Thus explicit computation is often infeasible. 
However, one may estimate
$\partial_{\alpha_i}  
\E_{\widetilde{\pi}^{(\overline{X})}(\balpha)}
\left\{ f(X) \right\}$
and 
$\partial_{\alpha_i} \
\E_{\widetilde{\pi}^{(\overline{X})}(\balpha)}
\left\{ \lambda_{\beta_r}(X);\bbeta \right\} $
through any sensitivity analysis method from a simulation with only fast reactions.
For example, when running the fast-only simulation
(under $\widetilde{Q}^{(\overline{X})}(\balpha)$)
for equilibration in Algorithm \ref{alg:TTS-SSA}, one can compute
the corresponding likelihood ratio process 
$\widetilde{W}^{(\overline{X})}_{\alpha_i}(t)$ as in 
\eqref{eq:W_representation} 
(with $t$ large enough so that 
$\widetilde{p}^{(\overline{X})}_t(\balpha) \approx 
\widetilde{\pi}^{(\overline{X})}(\balpha)$).
Then one can estimate 
the derivitives in \eqref{eq:TTS_LR}, \eqref{eq:TTS_W-rep} by
\begin{align}
\begin{aligned}
\label{eq:micro_sensitivity}
  \frac{\partial}{\partial \alpha_i}
  \overline{f}(\overline{X}; \balpha) 
  &
 \approx 
\E_{\widetilde{p}_t^{(\overline{X})} } 
\left\{ f( \widetilde{X}(t)) 
\widetilde{W}_{\alpha_i}(t) \right\} \\
  \frac{\partial}{\partial \alpha_i}
  \overline{\lambda}_{\beta_r}(\overline{X}; \btheta) 
  &
 \approx
\E_{\widetilde{p}_t^{(\overline{X})} } 
\left\{ \lambda_{\beta_r}( \widetilde{X}(t)) 
\widetilde{W}_{\alpha_i}(t) \right\} ,
\end{aligned}
\end{align}
using the proposed CELR method \eqref{eq:CELR-defn}
during the micro-equilibration computation.
Plugging these estimated values 
into 
\eqref{eq:TTS_W-rep}
allows one to calculate
$\overline{W}_{\alpha_i}$ for each macro-trajectory,
which in turn allows for sensitivity estimation 
with respect to $\alpha_i$ in 
\eqref{eq:TTS_LR}.

We note that our derivation leads to a different form of the
multi-scale LR estimator compared 
with Ref. \citen{nunez_steady_2015}.
The latter estimated the reweighting measures for the exact process
${W^\ep}_{\alpha_i}(t)$ by adding together the
micro reweighting measures $\widetilde{W}_{\alpha_i}$
from within each fast class visited, the idea being that 
$\widetilde{W}_{\alpha_i}(t)$ is a zero-mean martingale 
which adds no new information and only increases
 in variance once the 
fast-only process has converged to steady-state.
Henceforth, we refer to this approach as the 
``Truncated Likelihood Ratio'', as it approximates the exact
reweighting coefficient $W^\ep(t)$ 
via a truncated observation
within each fast-class.
Conversely, the TTS Likelihood Ratio
uses the exact representation \eqref{eq:TTS_W-rep} for the
macro process, and then estimates the terms
via \eqref{eq:micro_sensitivity}

Lastly, we note that the above procedure
will estimate sensitivities with respect to 
the parameter set
$\btheta=[\balpha, \bbeta]$.
However, the original goal 
was to estimate sensitivities with respect to
$\btheta^\ep= [\balpha/\ep, \bbeta] 
= [\balpha^\ep, \bbeta] $.
The sensitivities of the slow parameters $\bbeta$
are the same, but the TTS Sensitivity scheme computes
fast sensitivities against the rescaled parameter $\alpha_i$
rather than against the original parameter 
$\alpha^\ep_i = \alpha_i / \ep$.
However, it can be shown
(Appendix \ref{sec:analytic_SS})
that at steady-state we have
\begin{equation}
  \frac{\partial}{\partial \alpha^\ep_i} 
  \E_{\pi^\ep} \left\{ f(X;\btheta^\ep) \right\} 
  = \ep \frac{\partial}{\partial \alpha_i}
  \E_{\pi^\ep} \left\{ f(X;\btheta^\ep)  \right\} .
  \label{eq:rescaled_sensitivities}
\end{equation}
Therefore, by multiplying the TTS sensitivity estimate (against $\alpha_i$)
by a factor of $\ep$, one thus obtains the estimate against the original
parameter $\alpha^\ep_i$. Thus one can use the TTS scheme to estimate the full gradient
of $\nabla_{\btheta^\ep} \E_{\pi^\ep} \left\{ f(X; \btheta^\ep ) \right\}$.

\section{Batch-Means Stopping Rule}
\label{sec:BMstop}
A crucial question when implementing a TTS simulation
is: How long to run the micro-equilibration for?
That is, how large a value of $t$ does one use to compute
the ergodic averages
\begin{align*}
  \overline{f}\left( \overline{X};\btheta \right)
  & \approx 
  \frac{1}{t} \int_0^t 
  f\left( \widetilde{X}^{(\overline{X})}(s);\btheta \right)ds \\
  \overline{\lambda}_\beta \left( \overline{X}; \btheta \right)
  & \approx 
  \frac{1}{t} \int_0^t
  \lambda_\beta \left( \widetilde{X}^{(\overline{X})} (s) ; \bbeta \right) ds
\end{align*}
for a desired level of accuracy $\delta$?
Taking too small a value for $t$ risks imposing a large bias.
However, the $O(1/t)$ rate of convergence for the ergodic average
implies almost nothing is gained by integrating $t$ past the relaxation
time of the system.
Furthermore, when computing the micro-sensitivities one uses
the micro-reweighting process $\widetilde{W}_{\alpha_i}(t)$
by
\begin{align*}
  \frac{\partial}{ \partial \alpha_i}
  \overline{f}(\overline{X}; \balpha)
  & \approx 
  \E_{\widetilde{p}_t^{(\overline{X})}(\balpha) } 
  \left\{ f\left( \widetilde{X}(t) \right) 
  \widetilde{W}_{\alpha_i}(t) \right\} \\
  \frac{\partial}{\partial \alpha_i}
  \overline{\lambda}_\beta
  \left( \overline{X} ; \btheta \right)
  & \approx
  \E_{\widetilde{p}_t^{(\overline{X})} (\balpha)} 
  \left\{ \lambda_\beta \left( \widetilde{X}(t) ; \bbeta \right) 
  \widetilde{W}_{\alpha_i}(t) \right\} ,
\end{align*}
where the variance of $\widetilde{W}_{\alpha_i}(t)$
increases with the time-horizon $t$.
Thus, we would ideally take the smallest value of $t$
such that 
$\| \widetilde{p}^{(\overline{X})}_t 
- \widetilde{\pi}^{(\overline{X})} \|
\le O(\delta) $.
However, different fast-classes $\overline{X}$ can have 
vastly different sizes.
This can result in significantly different relaxation times
for each class.
It is then ideal to have an 
{ \em adaptive stopping rule }
which terminates the micro (fast-only) simulations
when the ergodic averages have converged to the steady state mean.

Current implementations of an ``averaged'' or ``multi-scale''
SSA use a constant relaxation time $t_f$ 
for the micro-averaging step\cite{e_nested_2007, nunez_steady_2015}
whose choice is motivated by some a priori insight into the system.
In Refs.\citen{cao_multiscale_2005, cao_slow-scale_2005} the authors
also use a fixed time $t$, but then exploit algebraic relations
of the steady-state means to try to obtain better approximations.
In Ref. \citen{samant_overcoming_2005}, a stopping rule is developed which
determines that equilibrium is reached when 
the averaged values of the propensities
of the forward and backward reactions are
approximately equal for each reaction pair.
However, experience has shown this 
``partial-equilibrium'' stopping rule can
stop prematurely (in the transient regime)
with significant probability
for systems with relatively few reaction-pairs.
Thus, we seek to obtain a robust, adaptive stopping rule
for terminating the micro-equilibration simulation.

\subsection{Batch-Means for Steady-State Estimation}

The problem at hand is really one about 
Markov chain mixing-times
and the integrated autocorrelation time $\tau_{int}$.
Analytically, the mixing and integrated autocorrelation times
are related to the spectral gap of the underlying generator
\cite{levin_markov_2009,
geyer_practical_1992, 
kipnis_central_1986}.
Unfortunately, for large systems direct computation is usually infeasible.
Some common approaches involve estimation autocorrelation function
$A(t)$ of the process and then exploiting the relation
$\tau_{int} = 2\int_0^\infty A(t)dt$ to derive estimates
of $\tau_{int}$ from the estimates of $A(t)$
\cite{geyer_practical_1992,
berg_introduction_2004,
sokal_monte_1997}.
However, if the goal is to terminate the simulation
when the ergodic average has converged appropriately,
then these methods are indirect and can be computationally
intensive.
Another common approach is to exploit the regenerative
structure of Markov chains
\cite{meyn_markov_2009}
to obtain independent and identically distributed samples of
the process and obtain confidence bounds on the ergodic average.
However, these methods can be inefficient for complex systems
where the return time to the initial state can be quite large.

We instead turn to the method of batch means
\cite{asmussen_stochastic_2007,
alexopoulos_implementing_1996}
for determining confidence bounds 
(and thus a measure of convergence)
for the steady-state estimation problem inside each
fast-class.
The use of batch means is applicable to a wide range of problems
(any which satisfy a central limit theorem), and its 
implementation is very straightforward.
For a general Markov chain $X(s)$ with an observable function $f$,
write $Y(s)=f(X(s))$ and $\overline{Y}(t) = 1/t \int_0^t Y(s) ds$ .
We denote $f_\pi=\E_{\pi}\left\{ f(X) \right\}$ for the steady-state
value we wish to estimate.
Then under some general conditions
\cite{kipnis_central_1986}
$Y(s)$ satisfies a functional Central Limit Theorem:
\begin{align*}
  \frac{t}{\sqrt{\ep} }
  \left\{ \overline{Y}(t/\ep) - f_\pi \right\}_{t \ge 0} 
  \overset{\mathcal{D}}{\longrightarrow}
  \left\{ \sigma B(t) \right\}_{t \ge 0}
\end{align*}
in the sense of weak convergence as $\ep \to 0$.

Suppose that $t \ge N_b \tau_{relax}$, where $\tau_{relax}$ is the 
relaxation time of the system and $N_b$ is a number of ``batches''
(bins) to partition the trajectory into.
Then the batch means
\begin{align*}
  \overline{Y}_k(t) \defd 
  \frac{1}{t/N_b} \int_{(k-1) t/N_b}^{k t/N_b} Y(s) ds
\end{align*}
are approximately (as $t \to \infty$) 
independent and identically distributed samples of 
$\mathcal{N} \left( f_\pi, \sigma^2 N_b/t \right)$.
Thus 
\begin{align*}
  \sqrt{N_b} \frac{\overline{Y}(t) - f_\pi}{s_{N_b}(t)}
  \overset{\mathcal{D}}{\longrightarrow}
  T_{N_b -1}
\end{align*}
as $t \to \infty$, where $T_{N_b -1}$ is the Student's 
$t$-distribution and $ s^2_{N_b}(t)$ is the
sample variance among batches,
\begin{align*}
  s_{N_b}^2(t) \defd
  \frac{1}{N_b -1} \sum_{k=1}^{N_b}
  \left[ \overline{Y}_k(t) - \overline{Y}(t) \right]^2 .
\end{align*}
Thus, for $t$ sufficiently large, 
a $(1-\delta_{CI})100\%$ confidence interval 
for the value of $f_\pi$ is given by
$\overline{Y}(t) \ \pm \ MOE \left( t,N_b, \delta_{CI} \right) $,
where
\begin{multline}
  \label{eq:batch-means_CI}
 MOE \left( t,N_b, \delta_{CI} \right) \defd
 \left( t_{quantile} \right)
  \frac{s_{N_b}(t)}{\sqrt{N_b}} \\
\end{multline}
and $t_{quantile}$ is the $(1-\delta_{CI}/2)$th quantile of
the Student's $t$-distribution with $N_b -1$ degrees of freedom.

The usual perspective for applying batch means
is that one has a fixed set of data 
$\left\{ Y(s): s \in [0,t] \right\}$
to partition, and then must choose the number of
batches $N_b$ appropriately so that 
each batch length $t/N_b$ is long enough
so that the batch mean errors 
$\left[ \overline{Y}_k(t) - \overline{Y}(t) \right]$
are approximately independent, identically distributed, and Gaussian.
One then often chooses $N_b$ to be relatively small 
(say, 5 to 30) 
\cite{geyer_practical_1992,glynn_simulation_1990}
to ensure the independent and Gaussian assumptions hold.
When viewing the asymptotic structure as the amount of data $t$ grows,
then one can ensure that the asymptotic central limit theorem
holds if the number of batches grows as $N_b(t) \simeq \sqrt{t}$.
In Ref. \citen{alexopoulos_implementing_1996}, the authors consider
strategies which let the number of batches grow if the correlation
between batches is near 0, and otherwise hold $N_b(t)$ fixed until
the batch correlation decays to 0.

Since our goal is to simulate only enough (micro-scale) data
so as to determine the steady-state values $f_\pi$, we instead
take the perspective that one has a fixed number of
batches $N_b$ desired, and that one should generate data 
$\left\{ Y(s) : s \in [0,t] \right\} $ until each
of the batch means $\overline{Y}_k(t)$ are (approximately) 
independent and identically distributed about $f_\pi$.
For a fixed level of precision $\delta_{precise}$,
confidence level $\delta_{CI}$, 
and the number of batches (independent samples)
$N_b$, the Batch-Means Stopping Rule terminates the 
simulation when $MOE(t)=MOE(t,N_b, \delta_{CI}) \le \delta_{precise}$,
where $MOE(t)$ is defined by \eqref{eq:batch-means_CI}.
Figure \ref{fig:Batching_Diagram} gives a depiction of how the 
Batch-Means Stopping Rule is implemented.

\begin{figure}
  \centering
    \includegraphics[scale=.35]{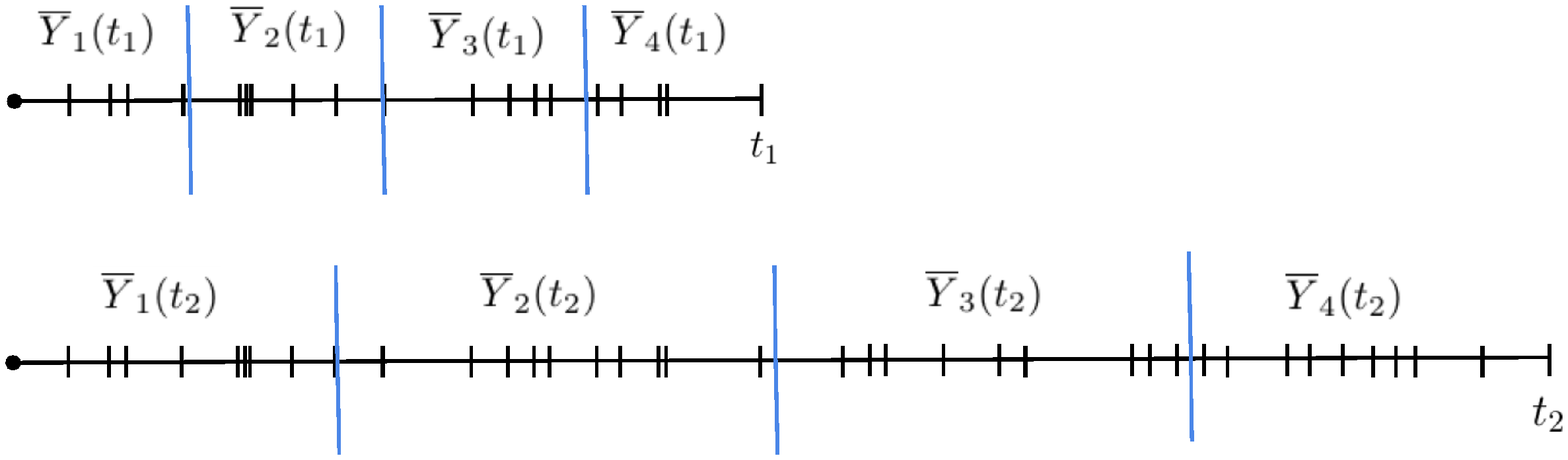}
    \caption{A sketch of the batch-means stopping rule.
      The process is simulated for a fixed number of jumps ($N_J=20$) to a terminal
      time $t_1$,
      and then the trajectory is partitioned into $N_b=4$ batches to compute
      the variance between the batch means $\overline{Y}_k(t_1)$.
      If the confidence bounds are precise enough 
      ($MOE(t_1)\le \delta_{precise}$), then the simulation is terminated
      and each batch gives an iid sample of 
      $\mathcal{N}\left( f_\pi, \sigma^2 \right)$.
    Otherwise, another $N_J=20$ jumps are simulated and the process is repeated.}
    \label{fig:Batching_Diagram}
\end{figure}

In addition to giving an on-line estimate of the relaxation time of the system,
the Batch-Means Stopping Rule gives $N_b-1$ (nearly) independent samples
of trajectories with initial distribution approximately equal to the stationary 
distribution. Furthermore, one can compute the reweighting coefficients 
$W_{k,\theta}(t)$ in each batch to give $N_b$ (nearly) independent samples
of the steady-state reweighting coefficients (in a manner similar to 
the ``Time-Averaged Correlation Function'' 
method of Ref. \citen{warren_steady-state_2012}).
 
\subsection{Batch-Means Stopping Implementation}
\label{sec:BMSI}

Suppose we have a general reaction network with 
$M_r$ reactions and $M_\theta$ reaction parameters. 
Here we allow the possibility
that $M_r \neq M_\theta$ for general propensity functions $\lambda_r(x;\btheta)$
(e.g. Michaelis-Menten kinetics), 
whose parameter derivatives $\partial_{\theta_i} \lambda_r(x;\btheta)$
we can compute explicitly for all $i=1,\dots,M_\theta$ and all $r=1, \dots, M_r$.
Denote by $\zeta_r$ the stoichiometric vector for the $r$th reaction.
Our goal is to estimate the gradient
$\nabla_\theta \E_{\pi(\btheta)}\left\{ f(X) \right\}$ 
for some observable function $f$.

We introduce the following notation for the batch-means stopping rule.
$\hat{X}(n)$ is the $n$th state of the reaction network,
$T(n)$ is the time of the $n$th jump and
$\hat{f}(n)=f(\hat{X}(n))$ for the value of the observable at the $n$th state.
$F(n)= \int_0^{T(n)} f\left( X(s) \right)ds$ is the 
time-integrated value of $f$ up to the $n$th jump,
$r^*(n)$ is the reaction which fires at jump $n$ 
(taking the system from $\hat{X}(n)$ to $\hat{X}(n+1)$),
$\hat{e}_i$ is the vector in $\rr^{1 \times M_\theta}$
with $1$ in the $i$th component and zeros elsewhere.
$R(n, \btheta) \in \rr^{1 \times M_\theta}$ and 
$B(n,\btheta) \in \rr^{1 \times M_\theta}$ are the 
first and second terms of \eqref{eq:W_representation} 
with respect to each of the parameters $\theta_i \in \btheta$
($i=1, \dots, M_\theta$).
$N_b$ is the number of batches (approximately independent samples) to be used,
$\delta_{CI}$ is the desired confidence level (for a $(1-\delta_{CI})100 \%$
confidence interval, and 
$\delta_{precise}$ is the maximum allowed radius of the confidence interval
at the stopping time.
$N_J$ is the number of jumps to simulate before retesting the batches
for convergence.
Then one can write the batch-means stopping rule as follows.

\begin{algorithm}[Batch-Means Stopping Rule with Sensitivity Estimation]
  \label{alg:batch-means} \rm ~\\

  \begin{enumerate}[\bf (1)]
    \item {\bf Initialize }
      \begin{itemize}
	\item $\hat{X}(0)= x_0$,
	  $\hat{f}(0) = f\left( \hat{X}(0) \right)$, 
	  $T(0)=0$,
	  $F(0)=0$, 
	  $R(0)= [0,\dots,0] \in \rr^{1 \times M_\theta}$,
	  $B(0)= [0, \dots, 0] \in \rr^{1 \times M_\theta}$.
	  ${ tests}=0$ (number of times the data has been tested
	  for convergence).
	  Calculate $t_{quantile}=(1-\delta_{CI}/2)$ quantile of a
	  Student's $t$-distribution with $N_b-1$ degrees of freedom.

      \end{itemize}

    \item {\bf Generate and Record Data}
      Simulate $N_J$ jumps and record values immediately after each jump.
	For $n=N_J \cdot {tests}, \dots, N_J \cdot ( {tests}+1) -1$, 
	  \begin{itemize}
	    \item Compute $\lambda_r(\hat{X}(n);\btheta)$, 
	      $\frac{\partial}{\partial \theta_i} \lambda_r(\hat{X}(n), \btheta)$
	      for all $r=1, \dots, M_r$ and all $\theta_i =1, \dots M_\theta$.
	      Set $\lambda_0(\hat{X}(n); \btheta)
	      = \sum_{r=1}^{M_r} \lambda_r (\hat{X}(n), \btheta)$, and 
	      $\frac{\partial}{\partial \theta_i} \lambda_0(\hat{X}(n), \btheta)
	      = \sum_{r=1}^{M_r} \frac{\partial}{\partial \theta_i}
	      \lambda_r(\hat{X}(n), \btheta)$ for all $\theta_i \in \btheta$.
	    \item Sample $\Delta t(n) \sim \Exp\left( 
	      \lambda_0(\hat{X}(n),\btheta) \right)$, and 
	      $r^*(n) \sim  
	      \frac{1}{\lambda_0(\hat{X}(n), \btheta)}$ \\ 
	       \qquad $\qquad \times
	      \left[ \lambda_1(\hat{X}(n);\btheta), \dots, 
	      \lambda_{M_r}(\hat{X}(n);\btheta) \right] $.
	    \item Update
	      $T(n+1)= T(n) + \Delta t(n) $ \\
	      $\hat{X}(n+1)= \hat{X}(n)+ \zeta_{r^*(n)}$\\
	      $\hat{f}(n+1)= f(\hat{X}(n+1))$ \\
	      $F(n+1) = F(n) + \hat{f}(n) \cdot \Delta t(n)$ \\
	      $R(n+1,\btheta)= R(n,\btheta)
	      + \sum_{i=1}^{M_\theta} 
	      \hat{e}_i   \\  \qquad \times \ 
	      \left[ \frac{\partial}{\partial \theta_i}
	      \lambda_{r^*(n)}\left( \hat{X}(n), \btheta \right) \right]
	      / \lambda_{r^*(n)}\left( \hat{X}(n), \btheta \right)$ \\
	      $\hat{b}(n, \btheta)=
	      \sum_{i=1}^{M_\theta} \hat{e}_i \cdot
	       \left[ \sum_{r=1}^{M_r} \frac{\partial}{\partial \theta_i}
	      \lambda_r (\hat{X}(n), \btheta) \right] $ \\
	      $B(n+1,\btheta) = B(n, \btheta) + \hat{b}(n, \btheta) \cdot
	      \Delta t(n)$ \\
	      $W(n+1,\btheta) = R(n+1, \btheta) - B(n+1, \btheta) $.

	      
	  \end{itemize}

    \item{ \bf Test Batches for Convergence}
      \begin{itemize}
	\item $N_{end} = N_J \cdot ({tests}+1)$= index of last available 
	  data point. \\
	  $\bar{Y}= F\left( N_{end} \right)/ T(N_{end})$ = total time-averaged value \\
	  $t_{batch}=T(N_{end})/N_b$= time length each of batch \\
	  Initialize $F^B=0$ (total integral through end of previous batch).
	\item for $ k=1, \dots, N_b$
	  \begin{itemize}
	    \item $ind_B(k)$= $\max \left\{ n : 
		T(n) \le k\cdot t_{batch} \right\}$=index of the last jump
		in batch $k$.
	      \item $F_A^B(k) = F(ind_B(k)) + \hat{f}(ind_B(k)) $ \\
		$ \times \left[ k\cdot t_{batch} - T(ind_B(k)) \right]
		- F^B$=total integrated value of $f(X(s))$ inside batch $k$.
	      \item $\overline{Y}_k =F_A^B(k)/t_{batch}$= $k$th batch-mean
	      \item $F^B \leftarrow F^B + F_A^B(k) $ 
		(update integral to end of previous batch)
	  \end{itemize}
	
	\item $s_{N_b}^2= \frac{1}{N_b-1 } \sum_{k=1}^{N_b}
	  \left[ \overline{Y}_k - \overline{Y} \right]^2$
	  = variance between batches
	\item $MOE= t_{quantile} * \sqrt{ s^2_{N_b} / N_b}$
	  = margin of error for confidence interval
	\item If $MOE \le \delta_{precise}$, then go to {\bf (4)}.
	  Else, ${tests} \leftarrow {tests}+1$ and go back to {\bf (2)}.
      \end{itemize}

    \item{ \bf Compute LR Weights} in each batch.
      
	Initialize $W^B(\btheta)=[0,\dots,0] \in \rr^{1 \times M_\theta}$.
	For $ k=1, \dots, N_b$, 
	  \begin{itemize}
	    \item $W_A^B(k, \btheta) 
	      = W(ind_B(k), \btheta)$  \\ $
		- \hat{b}(ind_B(k), \btheta) 
		\cdot \left[ k\cdot t_{batch} - T(ind_B(k)) \right] $ \\ $
		- W^B(\btheta) $
	      \item $W^B(\btheta) \leftarrow W^B(\btheta) + W_A^B(k,\btheta) $
	  \end{itemize}

	\item{\bf Compute Sensitivity Estimates}
	  for  $\nabla_\btheta \E_{\pi(\btheta)} \left\{ f(X) \right\}$:
	  \begin{itemize}
	    \item{Likelihood Ratio}
	      \begin{align*}
		{\bf LR} = \frac{1}{N_b}
		\sum_{k=1}^{N_b} \hat{f}\left( ind_B(k) \right)
		W_A^B(k, \btheta)
	      \end{align*}

	    \item{Centered Likelihood Ratio}
	      \begin{multline*}
		{\bf CLR} = {\bf LR} \\
		- \left[ \frac{1}{N_b} \sum_{k=1}^{N_b} 
		\hat{f}\left( ind_B(k) \right)\right] 
		\cdot \left[ \frac{1}{N_b} \sum_{k=1}^{N_b}
		  W_A^B(k, \btheta) \right]
	      \end{multline*}
\item{Ergodic Likelihood Ratio} \begin{align*} {\bf ELR} = \frac{1}{N_b} \sum_{k=1}^{N_b} \overline{Y}_k W_A^B(k,\btheta)
	      \end{align*}

	    \item{Centered Ergodic Likelihood Ratio}
	      \begin{align*}
		{\bf CELR} = {\bf ELR} 
		- \left[ \frac{1}{N_b} \sum_{k=1}^{N_b} 
		\overline{Y}_k\right] \cdot
		\left[ \frac{1}{N_b} \sum_{k=1}^{N_b} 
		W_A^B(k,\btheta) \right]
	      \end{align*}
      \end{itemize}
      
  \end{enumerate}
  
\end{algorithm}


\section{Simulation Results}
\label{sec:sim_results}
Here we present numerical results to 
display the performance of the proposed algorithms. 
In what follows, 
we compare the output of an exact simulation 
at the single time scale (STS)
to the accelerated two time scale (TTS) approximation.
From \eqref{eq:TTS_sens_estimate}, 
we expect the differences in observable 
averages and their derivatives to be $O(\ep)$. 
Because differences are small, 
we use a simple test system for which many samples can be run to obtain
accurate statistics. 

Consider a reaction network with species A, B, and C and isomerization reactions given by
\[
  \left.
  \begin{array}{rcl}
    A  \overset{k_1 / \ep}{\rightarrow} B , & \quad
    B  \overset{k_2 / \ep}{\rightarrow} A  , & \quad 
    B  \overset{k_3}{\rightarrow}  C
  \end{array}
  \right.
\]
For small values of $\ep$, the system becomes stiff as the isomerization between A and B reaches equilibrium much faster than B is converted to C. A TTS approximation assumes that $A\rightarrow B$ and $B\rightarrow A$ are fast and equilibrated.

{We first compare the output of the accelerated TTS simulation
against the exact STS simulation for varying levels of stiffness $\ep$}.
For our simulations, initial conditions of 
$(A_0,B_0,C_0)=(100,0,0)$ and the parameters $(k_1,k_2,k_3)=(1,1.5,2)$ 
are chosen.
10,000 replicate (independent) trajectories 
are run for various values of $\ep$. 
Statistics are taken at a termination time of $t=0.5$s. 
Species averages are calculated as arithmetic averages 
over the independent trajectories while sensitivities 
are computed with the CLR method shown in \eqref{eq:CLR-defn}. 
The error due to statistical averaging is estimated using t-test statistics for
averages and a bootstrapping method for sensitivities. 
Sensitivities with respect to the ``slow'' parameter $k_3$ are displayed
for each species.
As discussed in previous works\cite{nunez_steady_2015},
sensitivities with respect to parameters related to fast reactions encounter 
significant noise, 
and thus we omit them in order to clearly observe 
the difference between STS and TTS.  
Figure \ref{fig:STS_eps_converge} shows the disparity between the STS and 
TTS systems for various values of $\ep$. Errors are normalized by the TTS value such that $\mathrm{Error}=\frac{\mathrm{STS}-\mathrm{TTS}}{|\mathrm{TTS}|}$. 
{Indeed, one observes the difference is proportional to $\ep$,
as expected from Corollary~\ref{cor:expectation-error} and 
\eqref{eq:TTS_sens_estimate}. }

\begin{figure}
  \centering
    \includegraphics[width=0.45\textwidth]{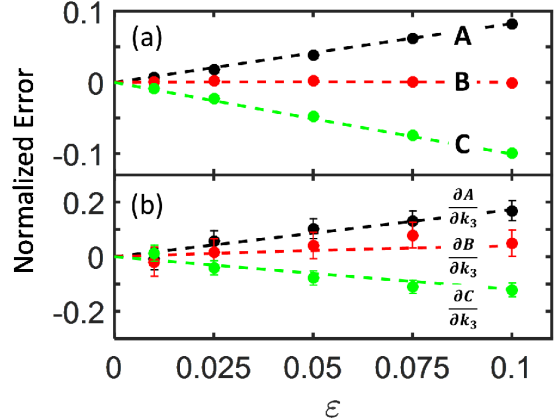}
    \caption{Normalized error of the two-time-scale (TTS) (accelerated, approximate) simulation from the 
    exact single-time-scale (STS) simulation. 
    The plot (a) shows errors in species averages while the
    plot (b) shows errors in sensitivities of each species with
    respect to $k_3$. 
    {Points indicate simulation statistics while dashed lines confirm the linear trend.} 
     The error bars are 95\% confidence intervals of statistical noise. 
     }
    \label{fig:STS_eps_converge}
\end{figure}


\begin{figure}
  \centering
     \includegraphics[width=0.45\textwidth]{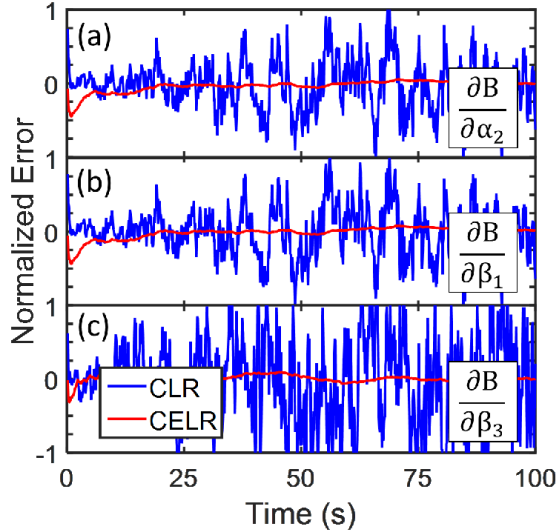}
    \caption{Time evolution of normalized errors obtained for sensitivities computed by CLR and CELR estimators. 
     Simulation estimates are referenced and normalized by the analytical solution at each time point to compute the error. 
    The graphs (a), (b), and (c) show estimates of different sensitivity indices for the species B. 
    {One observes that the variance of the CLR estimates 
    increases with time, making for inefficient estimation.
    In contrast, the CELR estimates (the solid red line) converge quickly to the
    steady state sensitivities with variance which is roughly constant with time, 
    allowing for efficient steady-state sensitivity estimation.}
    }
    \label{fig:TTS_ergodicSAerror}
\end{figure}

Next, the CLR and CELR methods from Section \ref{sec:LRmethods} are tested in performing sensitivity analysis in a TTS system. The reaction network described in Section \ref{sec:TTSRN} is simulated using Algorithm \ref{alg:TTS-SSA}. To assess convergence of the microscale distribution, the batch-means stopping criterion described in Section \ref{sec:BMstop} is used with a tolerance of $\delta=0.05$. 1000 replicate trajectories are run to a time horizon of $t_f=100$s. The initial conditions used are $(A_0,B_0,*_0)=(30,60,10)$ and the parameters used are 
$(\alpha^\ep_1, \alpha^\ep_2, \beta_1,\beta_2, \beta_3) =
(1/\ep,1.5/\ep, 2, 1, 0.4)$. 

  Species populations, time-averaged species populations, and trajectory derivatives are recorded for each run over 
 time. Using these recorded statistics, the sensitivities for all 15 (3 species and 5 parameters) 
 species/parameter combinations are computed at each time point. Figure \ref{fig:TTS_ergodicSAerror} 
 shows the time evolution of the normalized errors in sensitivity estimates {of the species B} over time. Estimated values from simulation are referenced 
 to the analytical answer as computed from a differential algebraic equation 
 (see Appendix \ref{sec:AnalylicSolnExample}) and normalized by that amount so that $\mathrm{Error} = \frac{\mathrm{estimated} - \mathrm{analytical}}{|\mathrm{analytical}|}$.

As expected, CLR estimates are noisy, with variance that grows linearly with time.
 At short times, the variance is small enough to obtain reasonable estimates. 
As time increases, the noise becomes significant with respect to the actual 
values (the magnitude of the normalized error becomes comparable to 1). 
In contrast, the ergodic likelihood ratio (CELR) fails at short times with a noticeable bias. 
However, the bias, which exists due to a relaxation period, decays as $O(1/t)$ when time increases and the system approaches its steady-state. 
The variance of the CELR estimates remain constant because the variance of trajectory 
derivatives increases linearly in time while the variance of ergodic species 
averages is proportional to $1/t$. 
At long times 
(where the CLR is too noisy for efficient estimation), 
the ergodic likelihood ratio obtains accurate estimates with very low variance. 
Therefore, it is advisable to use the CLR method for 
short times (in the transient regime) and the CELR method 
for long times to obtain steady state values.

{Table \ref{tab:table1} shows the error and statistical noise 
of the CLR and CELR estimations of sensitivities of the species B at a short 
($t=1.3$s) and long ($t=100$s) times. 
Statistical noise is obtained from bootstrapping the samples used to compute the sensitivity estimates. 
At $t=1.3$, the CLR method has low error (theoretically, there is no bias) 
as well as low variance. 
The CELR estimator has a similarly low variance, but high error (due to the $O(1/t) $ bias).
At $t=100$s, the CLR estimates have much higher variance which induces
large empirical error. 
In contrast, the bias of the CELR estimate decreases in time while the variance remains low,
providing very small empirical errors at large times. 
}

\begin{table}[h!]
  \begin{center}
    \caption{Comparison of the error and statistical noise for the CLR and CELR estimators 
    at a short time (during the transient regime) and at a long time (corresponding to steady state). 
     Values in the table refer to the sensitivity of the species B with respect to the parameter given by the row label. Values are reported as a percent of the analytically obtained sensitivity value.}
    \label{tab:table1}
    \begin{tabular}{ccc|cc}
    \hline
    & \multicolumn{4}{c}{Percent Error} \\
    \hline 
    & CLR & CELR & CLR & CELR\\
    & \multicolumn{2}{c|}{$t=1.3$s} & \multicolumn{2}{c}{$t=100$s}  \\
    \hline 
    $\alpha_1$ & -1.4 & -40.0 & -83.5 & 0.0 \\
    $\alpha_2$ & -3.0	& -40.3 & -63.0 & -1.1 \\
    $\beta_1$ & 0.2 & -39.1 & -64.1 & 1.2 \\
    $\beta_2$ & -1.1 & -16.7 & -22.6 & -2.8 \\
    $\beta_3$ & -1.4 & -23.3 & 20.6 & -0.1 \\
    \hline 
    & \multicolumn{4}{c}{Half-length of 95\%} \\
    & \multicolumn{4}{c}{Confidence Interval} \\
    \hline 
    $\alpha_1$ & 12 & 8 & 99 & 11 \\
    $\alpha_2$ & 12 & 7 & 94 & 11 \\
    $\beta_1$ & 12 & 7 & 91 & 11 \\
    $\beta_2$ & 13 & 10 & 108 & 12 \\
    $\beta_3$ & 21 & 14 & 171 & 18  
    \end{tabular}
  \end{center}
\end{table}


\section{Conclusions}
\label{sec:conclusion}
This work develops a Two-Time-Scale (TTS) framework for multiscale reaction networks.
By decomposing the system into ``fast-classes'', one can approximate the behavior 
of the multiscale system by a lower-dimensional, single-scale, ``macro-averaged'' reaction network.
By applying a singular perturbation expansion of the underlying probability measures,
we have established rigorous bounds on the 
bias induced by the approximate macro-averaged model.
We then proposed a TTS algorithm for simulating the macro reaction network, 
using an adaptive batch-means stopping rule for 
determining when the micro-scale dynamics have sufficiently equilibriated.

In addition, we have shown that the sensitivities of the macro-averaged system
provide accurate approximations for the multiscale system.
Since the macro-averaged system is single-scale, it is possible to incorporate most existing sensitivity
estimation methods to the TTS algorithm to obtain estimates of the system sensitivities.
We proposed an Ergodic Likelihood Ratio estimator for steady-state sensitivity analysis, 
and demonstrated how it can be adapted to the Two-Time-Scale algorithm.
A simulation was then used to confirm the analytic error bounds and demonstrate the efficiency of the 
TTS Ergodic Likelihood Ratio estimator.


\section{Acknowledgements}

This work is supported by the U.S. Department of Energy Office of Science, Office of Advanced Scientific Computing Research, and Applied Mathematics program under Award No. DE-SC0010549.

\appendix
\section{Analytic Stationary Sensitivities}
\label{sec:analytic_SS}
For ergodic systems whose state space is relatively small
(such that the generator $Q(\btheta)$ can be explicitly constructed),
one can compute the steady-state probability vector $\pi(\btheta)$ 
by solving the linear system 
$\pi \cdot Q =\boldsymbol{0}$ 
and $\pi \cdot \boldsymbol{1} = 1$.
Here, we show that one can express the 
sensitivities of the steady-state measure
$\frac{\partial}{\partial \theta} \pi(\btheta)$ 
explicitly as a linear transformation of the nominal measure
$\pi(\btheta)$.
The representation we shall construct 
is an adaptation of the discrete-time technique in
Ref. \citen{funderlic_sensitivity_1986}
to continuous-time Markov chains,
and exploits the algebraic properties of the 
pseudo-inverse $Q^+$ of $Q$.

By differentiating $\pi(\btheta) \cdot Q(\btheta) = \boldsymbol{0}$,
 we have the relation
\[ \frac{\partial \pi}{\partial \theta} Q = -\pi \frac{\partial Q}{\partial \theta} \]
Then by 
expanding
with the Moore-Penrose pseudo-inverse,
we have
\[
  \left( \frac{\partial \pi}{\partial \theta} \right)' 
  = \left( Q \right)'\left( \frac{-\partial Q}{\partial \theta} \right)' \pi'
  +\left[ I - \left( Q' \right)^{+} Q' \right] w
\]
for some vector $w$.

Now, we see by 
the projection property
of $(Q')^+Q' = (Q Q^+)' = Q Q^+$ that the operator
$I-QQ^+$ is the projection operator onto the kernel of $Q'$ = span of $\pi'$,
so that $(I-QQ^+)w = \gamma \pi'$ for some scalar $\gamma$.
Thus we have the relation
\[
  \frac{\partial \pi}{\partial \theta} 
  = \pi \left( \frac{-\partial Q}{ \partial \theta} \right)
  Q^+ + \gamma \pi
\]
for some $\gamma \in \mathbb{R}$. 
It remains to determine $\gamma$ to have a 
method of relating the sensitivity coefficient 
to a linear transformation of $\pi$.

Now, we can see that $\pi(\theta) \cdot \boldsymbol{1} = 1$, so that
$\frac{\partial \pi}{ \partial \theta} \cdot \boldsymbol{1} = 0$.
Thus, we have
\begin{align*}
  0 = \frac{\partial \pi}{\partial \theta} \cdot \boldsymbol{1}
  = \pi \left( \frac{- \partial Q}{ \partial \theta} \right) Q^+ 
  \boldsymbol{1} + \gamma \pi \boldsymbol{1}
\end{align*}
so that 
\[
  \gamma = \gamma \pi \boldsymbol{1} = \pi \frac{\partial Q}{ \partial \theta} 
  Q^+ \boldsymbol{1} .
\]
Putting the above together, we can write $\frac{\partial \pi}{ \partial \theta} $ as
\begin{equation}
  \label{eq:pseudo-inverse_sensitivity}
  \frac{\partial \pi}{ \partial \theta} =
  \pi \left( \frac{\partial Q}{ \partial \theta} \right) Q^+
  \left[ \boldsymbol{1} \pi - I  \right].
\end{equation}

For reaction networks with relatively small state space $\M$,
\eqref{eq:pseudo-inverse_sensitivity} can provide a 
tractable method of computing the sensitivities of the steady-state measure
$\frac{\partial \pi}{\partial \theta}$ 
and thus of the steady-state expected value
\begin{multline}
  \label{eq:analytic_SS_sens}
  \frac{\partial}{\partial \theta} 
  \E_{\pi(\btheta)} \left\{ f(X ; \btheta ) \right\} \\
  = \sum_{x \in \M} \frac{\partial f(x;\btheta)}{\partial \theta} \pi(x; \btheta) 
  + \sum_{x \in \M} f(x;\btheta) \frac{\partial \pi(x;\btheta)}{\partial \theta} .
\end{multline}
For example, for 
reaction networks with mass-action propensities,
the entries of $Q$ shall always be linear in the parameters $\theta$.
Therefore, the derivatives $\frac{\partial Q}{\partial \theta}$ 
are easy to compute (whereas the stationary distribution $\pi(\btheta)$ can be 
quite complex as a function of $\theta$).
Thus by computing the pseudo-inverse $Q^+$ from $Q$ at the nominal value of 
$\btheta$ (e.g., via singular value decomposition),
one can analytically compute the system sensitivities without need of Monte Carlo
simulation.
For larger reaction networks, it may be possible to combine the Finite State 
Projection method\cite{munsky_finite_2006,sunkara_optimal_2010}
with this pseudo-inverse technique to estimate the exact sensitivities
by computing the analytic sensitivities of the reduced system.

Lastly, we use this representation to show that 
\eqref{eq:rescaled_sensitivities} holds.
Write the exact generator $Q^\ep$ as
$Q^\ep = Q(\btheta^\ep) = (1/\ep) \widetilde{Q} (\balpha) + \widehat{Q}(\bbeta)
= \widetilde{Q}(\balpha^\ep) + \widehat{Q}(\bbeta) $.
For a fast reaction parameter $\alpha^\ep_i = \alpha_i/\ep$ 
a chain rule gives
$ { \partial_{\alpha_i} }  \lambda(x; \balpha^\ep) 
= \left[ {\partial_{\alpha^\ep_i} } 
\lambda(x; \balpha^\ep ) \right] /\ep$, 
from which it follows (using \eqref{eq:Q-ep_decomp})  that
$\ep \left[ {\partial \alpha_i } \ \widetilde{Q}(\balpha^\ep) \right]
  = {\partial_{\alpha_i }}  \widetilde{Q}(\balpha)
  ={\partial_{\alpha^\ep_i } }  \widetilde{Q}(\balpha^\ep) $.
Putting this relation into 
\eqref{eq:pseudo-inverse_sensitivity}, 
we then have
\begin{equation}
  \label{eq:rescaled_SS_sens_pi}
  \frac{\partial}{\partial \alpha^\ep_i} \pi^\ep 
  = \ep \left[ \frac{\partial}{\partial \alpha_i} \pi^\ep \right].
\end{equation}
Similarly we see that
$\partial \alpha_i \ f(x, \balpha^\ep)
= \left[ \partial_{\alpha^\ep_i} \ f(x, \balpha^\ep) \right] /\ep$.
Finally, putting these relations into 
\eqref{eq:rescaled_SS_sens_pi}
we obtain \eqref{eq:rescaled_sensitivities},
\begin{align*}
  \frac{\partial}{\partial \alpha^\ep_i} 
  \E_{\pi^\ep} \left\{ f(X;\balpha^\ep, \bbeta ) \right\}
  =
  \ep \frac{\partial}{\partial \alpha_i} 
  \E_{\pi^\ep} \left\{ f(X;\balpha^\ep, \bbeta ) \right\} .
\end{align*}

\section{Proofs of Results}
\label{sec:proofs}
In this section we outline the proofs on the error bounds of 
the averaged reaction network and convergence of the sensitivities.
The error bounds in this work are
largely direct applications of the results in Ref.
\citen{yin_continuous-time_2013}
to the Two-Time-Scale reaction networks formulated here.
We present an overview of the proofs for insight and completeness.
Similarly, the sensitivity convergence result comes from
Ref. \citen{gupta_sensitivity_2014}; we shall only how to fit their result to the Two-Time-Scale framework.

\subsection{Proof of Theorem~\ref{thm:prob_err_bound}:} Using the formulation of the exact generator
$Q^\ep(\btheta) = (1/\ep)  \widetilde{Q}(\balpha) + \widehat{Q}(\bbeta)$,
the error bound on the induced probability measures of the exact and 
averaged systems is a direct application of Theorem 4.29 of 
Ref. \citen{yin_continuous-time_2013}.
We outline the main steps below.

Write $p^\ep(t) \in \rr^{1 \times m}$ for the probability measure 
of the exact system at time $t$. From the Kolmogorov forward equation
(a.k.a. Chemical Master Equation), we have
\begin{equation}
  \frac{d p^\ep(t)}{dt} = p^\ep(t) 
  \left[ \frac{1}{\ep} \widetilde{Q}(\balpha) 
  + \widehat{Q}(\bbeta) \right] .
  \label{eq:CME}
\end{equation}
Define the differential operator $\mathcal{L}^\ep$ 
on functions with values in $\mathbb{R}^{1 \times m}$ by
$\mathcal{L}^\ep f = \ep \frac{d f}{dt} 
- f (\widetilde{Q} + \ep \widehat{Q}) $.
Then $\mathcal{L}^\ep f =0$ if and only if $f$ solves
the CME \eqref{eq:CME}.
The form of the differential equation \eqref{eq:CME} 
suggests the plausibility of a singular perturbation expansion
of $p^\ep(t)$ by
\begin{equation}
  \label{eq:SP-expansion}
  p^\ep(t) = 
  \sum_{i=0}^\infty \ep^i \phi_i(t) 
  + \sum_{i=0}^\infty \ep^i \psi_{i} \left( \frac{t}{\ep} \right)
\end{equation}
Assuming for the moment that such a representation holds,
we proceed to derive the form of the ``regular'' terms
$\phi_i(t)$ and the ``boundary layer'' terms $\psi_i(t)$.
Applying $\mathcal{L}^\ep$ to \eqref{eq:SP-expansion} 
and equating terms of $\ep$ leads to the recursive equations
\begin{align}
  \begin{aligned}
    \label{eq:phi_recurs}
    \ep^0 : & \qquad \phi_0(t) \widetilde{Q} = 0 \\
    \ep^1 : & \qquad \phi_1(t) \widetilde{Q} = 
    \frac{d \phi_0(t)}{dt} - \phi_0(t) \widehat{Q} \\
    \vdots  &  \\
    \ep^i : & \qquad \phi_i (t) \widetilde{Q} = 
    \frac{d \phi_{i-1}(t)}{dt} - \phi_{i-1}(t)\widehat{Q} 
   \end{aligned}
\end{align}
and similarly, using the ``stretched-time'' variable
$\tau=t/\ep$ one has equations for $\psi(\tau)$
\begin{align}
  \begin{aligned}
  \label{eq:psi_recurs}
  \ep^0 : & \qquad  \frac{d \psi_0(\tau)}{d \tau} =\psi_0(\tau) \widetilde{Q} \\
  \ep^1 : & \qquad \frac{d \psi_1(\tau)}{d \tau} 
  = \psi_1(\tau) \widetilde{Q} - \psi_0(\tau) \widehat{Q} \\
  \vdots & \\
  \ep^i : & \qquad \frac{d \psi_{i}(\tau)}{ d\tau} 
  =\psi_{i}(\tau) \widetilde{Q} - \psi_{i-1}(\tau) \widehat{Q}.
\end{aligned}
\end{align}
At $t=0$, 
\begin{equation}
\sum_{i=0}^\infty \ep^i 
\left( \phi_i(0) + \psi_i(0) \right) = p^\ep(0), 
\label{eq:initial_condition}
\end{equation}
so $\phi_0(0) + \psi_0(0) = p^\ep(0)$ and 
$\phi_i(0) + \psi_i(0) =0 $ for all $i \ge 1$.
Since $p^\ep(t)$ is a probability measure with 
$p^\ep(t) \cdot \one = 1$,
by sending $\ep \to 0$ in \eqref{eq:SP-expansion} 
it follows that 
\begin{align}
  \label{eq:orthog_condition}
  \phi_0(t) \cdot \one = 1
  \quad \text{ and } \quad
  \phi_i(t) \cdot \one = 0
\end{align}
for all $t \in [0,T]$ and all $i \ge 1$.

Turning to the leading regular term $\phi_0(t)$,
we note that $\phi_0(t) \cdot \widetilde{Q} =0$ is not
uniquely solvable because
$\widetilde{Q}=\diag [\widetilde{Q}^{(1)}, \dots,
\widetilde{Q}^{(\NC)} ]$
has rank $m-\NC$ 
However, writing $\phi^{(k)}_0(t)$ for the sub-vector of
$\phi_0(t)$ corresponding to fast-class $\M_k$, 
then we must have 
$\phi^{(k)}_0(t) \cdot \widetilde{Q}^{(k)} = 0$ for all $k=1, \dots, \NC$.
Since each $\widetilde{Q}^{(k)}$ is an irreducible generator, 
we then have $\phi^{(k)}_0(t) = \gamma^{(k)}(t) \widetilde{\pi}^{(k)}$
for some scalar multiplier $\gamma^{(k)}(t)$.
It can be seen (Prop 4.24\cite{yin_continuous-time_2013})
that $\gamma^{(k)}(t) = \overline{p}_k(t)$,
where $\overline{p}(t)$ is the probability measure
among fast-classes $\M_1, \dots, \M_\NC$ induced by generator 
$\overline{Q}$ (as in \eqref{eq:Q-bar-matrix}) and 
initial distribution $p^\ep(0) \cdot \widetilde{\one}
=\Prob\left\{ X^\ep(0) \in \M_k \right\} $.
This in turn determines a unique solution
for $\phi^{(k)}_0(t)$ and therefore $\phi_0(t)$.
It follows that $\phi_0(t)$ is exactly the measure $p^0_t$
in Theorem \ref{thm:prob_err_bound} induced by the 
TTS simulation procedure.

With $\phi_0(t)$ determined, 
\eqref{eq:initial_condition} 
then gives the initial condition
$\psi_0(0) = p^\ep(0) - \phi_0(0) 
= p^\ep(0) [ I_{m} - \widetilde{\one} \widetilde{\pi} ]$,
from which one can solve \eqref{eq:psi_recurs} to obtain
$\psi_0(\tau) = \psi_0(0) \cdot \exp\left\{ \widetilde{Q}\tau \right\} $.
It can be shown (Prop. 4.25\cite{yin_continuous-time_2013})
that 
\begin{equation}
  \label{eq:psi_decay}
\| \psi_0(\tau) \| \le C \exp\left\{ -\widetilde{\kappa} \tau \right\} ,
\end{equation}
where $C$ depends on the Jordan-Form of $\widetilde{Q}$.
Higher order terms can also be solved for recursively
(Prop. 4.26\cite{yin_continuous-time_2013}),
and it can be shown (Prop. 4.28\cite{yin_continuous-time_2013})
\begin{align} 
  \label{eq:SP_error}
  \sup_{0 \le t \le T}
  \| \sum_{i=0}^n \ep^i \phi_i(t) + \sum_{i=0}^n \ep^i \psi_i(t/\ep)
  - p^\ep(t) \| = O(\ep^{n+1}) .
\end{align}
In particular, using the $0$th-order expansion we have
\begin{multline}
  \label{eq:SP-0_error}
  \| p_T^0 - p^\ep_T \| = \| \phi_0(T) - p^\ep(T) \| \\
\le \| \phi_0(T) + \psi_0(T/\ep) - p^\ep(T) \| + \| \psi_0(T/\ep) \| \\
\le \| O(\ep) + O(\exp{-\widetilde{\kappa} T/\ep}) \|
\end{multline}
\qed

\subsection{Proof of Corollary \ref{cor:expectation-error} }
With the singular perturbation bound \eqref{eq:SP_error},
Corollary \ref{cor:expectation-error} follows immediately.
Since the exact process $X^\ep(t)$ is ergodic, 
there exists a time horizon $T^\ep$ such that 
$\| p^\ep(t) - \pi^\ep \| \le \ep$ for $t \ge T^\ep$.
Similarly, by \eqref{eq:psi_decay}
there exists $\widetilde{T}$ such that $\| \psi_0(t) \| \le \ep$
for $t \ge \widetilde{T}$, and 
$\| \overline{p}(t) - \overline{\pi} \| \le \ep$
for $t \ge \overline{T}$, implying that
$\|\phi_0(t) - \overline{\pi} \widetilde{\pi} \| 
= \| \overline{p}(t) \widetilde{\pi} - \overline{\pi} \widetilde{\pi} \|
\le \ep$.
Then taking $T \ge \max \{ T^\ep, \widetilde{T},
\bar{T} \}$ and applying \eqref{eq:SP_error} , we have
  \begin{multline*}
      \| p^\ep(t) - \overline{\pi}\widetilde{\pi} \|  \le 
      \|p^\ep (t) - \phi(t) - \psi(t/\ep) \| \\
      + \| \phi(t) - \overline{\pi}\widetilde{\pi} \|
      + \| \psi(t/\ep) 
      \| \le O \left( \ep +
      \exp \left\{-\widetilde{\kappa} t/\ep \right\} \right) \\
   \| \pi^\ep - \overline{\pi}\widetilde{\pi} \|
      \le \|\pi^\ep -p^\ep(T) \| 
      + \|p^\ep(T) - \overline{\pi}\widetilde{\pi} \|
      \le O(\ep)
  \end{multline*}
  and the corollary follows.
\qed

\subsection{Proof of Proposition \ref{prop:weak_convergence} }
This is a direct application of Theorem 5.27\cite{yin_continuous-time_2013},
and also follows from the error bound \eqref{eq:SP_error}.
First, one uses \eqref{eq:SP-0_error} to establish that 
\begin{multline*}
  \lim_{t \to 0} \lim_{\ep \to 0}
  \E\left[ \overline{X^\ep}(s+t) - \overline{X^\ep}(s) 
  \big| X^\ep(s)=x^{(k)}_j \right] =0
\end{multline*}
and thus $\left\{ \overline{X^\ep}(\cdot) \right\}_{\ep > 0}$ is tight.
Then one shows that the finite-dimensional distributions converge
by taking arbitrary time points 
$0 \le t_1 < \dots < t_n \le T$ and apply the Chapman-Kolmogorov equations
to see
\begin{multline*}
  \Prob\left\{ 
    \overline{X^\ep}(t_n) = \overline{y}_n 
  , \dots, \overline{X^\ep}(t_1) = \overline{y}_1 \right\} \\
  = \sum_{j_1, \dots, j_n} 
  \Prob\left\{ \overline{X^\ep}(t_n)=x^{(\overline{y}_n)}_{j_n}
  \big| \overline{X^\ep}(t_{n-1}) = x^{(\overline{y}_{n-1})}_{j_{n-1}} \right\}
  \times \dots \\
  \times 
  \Prob\left\{ \overline{X^\ep}(t_2)=x^{(\overline{y}_2)}_{j_2}
  \big| \overline{X^\ep}(t_{1}) = x^{(\overline{y}_{1})}_{j_1} \right\} \\ 
  \times \Prob\left\{ \overline{X^\ep}(t_1) = x^{(\overline{y}_1)}_{j_1} \right\}.
\end{multline*}
Then applying the error bound \eqref{eq:SP-0_error}
to each transition term to obtain
\begin{multline*}
\Prob\left\{ \overline{X^\ep}(t_l) 
= x^{(\overline{y}_l)}_{j_l} \Big| 
\overline{X^\ep}(t_{l-1}) = x^{(\overline{y}_{l-1})}_{j_{l-1}} \right\} \\
\to \Prob\left\{ \overline{X}(t_l) = \overline{y}_l
\Big| \overline{X}(t_{l-1}) = \overline{y}_{l-1} \right\}
\end{multline*}
as $\ep \to 0$, 
and thus the finite dimensional distributions converge.

\qed

\subsection{Proof of Proposition \ref{prop:sens_converge}}
Proposition \ref{prop:sens_converge} is simply the application of 
Theorem 3.2 of Ref. \citen{gupta_sensitivity_2014} to the TTS
framework. 
The method and framework for separating time-scales in
Ref. \citen{gupta_sensitivity_2014}
is slightly different than the TTS framework used here,
but it can be seen that the two are equivalent.
Here, we briefly review the multiscale framework of 
Ref. \citen{gupta_sensitivity_2014} and show how 
one can translate between their ``remainder spaces''
and the TTS ``fast-classes''.

\subsubsection{Scaling Rates and Remainder Spaces}
As in Refs.\citen{kang_separation_2013, wang_efficiency_2014},
Ref.\citen{gupta_sensitivity_2014} 
considers
reaction rates of the form
$a^N_k(x, \theta) = N^{\rho_k}\lambda_k(x, \theta)$
which scale with the ``system size''  
or ``normalization parameter'' $N \gg 0$,
where $\rho_k$ is the scaling rate for the $k$-th reaction channel.
For a given normalizing parameter $N$, the corresponding
system is denoted by $X^{N}(t)$.
One analyzes the system against a reference time scale $\gamma$
by $X^{N}_\gamma(t) = X^{N}( t N^\gamma)$.
For a given normalizing parameter $N_0 \gg 0$, the corresponding
system is denoted by $X^{N_0}(t)$.
One analyzes the system against a reference time scale $\gamma$
by $X^{N}_\gamma(t) = X^{N}( t N^\gamma)$.

The scaling rates $\rho_k$ determine the time scales at which
the reaction channels fire.
For a system with with a single level of stiffness,
there are only two scaling rates,
$\rho_{fast} > \rho_{slow}$,
 which partition
the reaction channels as either fast or slow.
Write $\Gamma_1=\{ k : \rho_k=\rho_{fast} \}$ for the fast reaction
index set, 
and similarly $\Gamma_2$ for the slow reaction index set.

Take
$\mathbb{S}_2 = 
\{ v \in \rr^d_+ : \langle v, \zeta_k \rangle = 0 
\text{ for all } k \in \Gamma_1 \}$
so that $\langle X_\gamma^N(t), v \rangle $ is unchanged by fast reactions.
Then take $\mathbb{L}_2=\text{span}(\mathbb{S}_2)$
and $\Pi_2$ be the projection map from $\rr^d$ to $\mathbb{L}_2$,
so that $\Pi_2 \zeta_k = \boldsymbol{0}$ for all $k \in \Gamma_1$.

Let $\mathbb{L}_1 = \text{span} \{ (I - \Pi_2)x : x \in \M \}$, and for 
any $v \in \Pi_2 \M$ let
$\mathbb{H}_v = \{ y \in \mathbb{L}_1 : y = (I - \Pi_2)x,
\Pi_2 x = v, x \in \M \}$, the set of remainders of elements in $\M$ 
which get projected to $v$.
Then we can define an operator $\mathbb{C}^v$ by
\begin{align*}
\mathbb{C}^v f (z) = \sum_{k \in \Gamma_1} 
\lambda_k(v +z, \theta) [ f(z + \zeta_k) - f(z) ] 
\end{align*} 
which is a generator of a Markov chain with state space
$\mathbb{H}_v$ (note that $y \in \mathbb{H}_v \implies
y + \zeta_k \in \mathbb{H}_v$ for all $k \in \Gamma_1$).

Assuming $\mathbb{H}_v$ under $\mathbb{C}^v$ is ergodic,
there is a stationary distribution $\pi^v$.
Then for each slow reaction $k \in \Gamma_2$ 
one can define the ``averaged'' propensities
$\hat{\lambda}_k(v, \theta) = \int_{\mathbb{H}_v} 
\lambda_k(v + z, \theta) \pi^v(dz)$ for all $v \in \Pi_2 \M$ .
Using the random time change representation,
define the Markov chain on $\Pi_2 \M$ by
\begin{align}
\label{eq:Xhat_defn}
\hat{X}_\theta(t) = \Pi_2 x_0 + \sum_{k \in \Gamma_2} 
Y_k \left( \int_0^t \hat{\lambda}_k( \hat{X}(s), \theta) ds \right) 
\Pi_2 \zeta_k
\end{align}
Taking $\gamma_2=-\rho_{slow}$ as the slow time scale, 
one has
$\Pi_2 X^N_{\gamma_2, \theta} \Rightarrow \hat{X}_\theta$ as 
$N \to \infty$\cite{kang_separation_2013} under more general conditions
than Assumptions \ref{assum:finite_states},\ref{assum:recurr_states}.
Under this context, 
Theorem 3.2 of Ref.\citen{gupta_sensitivity_2014}
states that
\begin{align}
\label{eq:gupta_scaling_sens}
\lim_{N \to \infty} \frac{\partial}{\partial \theta} 
\E \left\{ f(X^N_{\gamma_2, \theta}(t)) \right\}
= \frac{\partial}{\partial \theta}
\E \left\{ f_\theta (\hat{X}_\theta(t) ) \right\}
\end{align}
where
$f_\theta(v) = \int_{\mathbb{H}_v} f(v+y) \pi^v_\theta(dy)$.

\subsubsection{Equivalence of Fast-Classes and Remainder Spaces}
Here we show how the TTS framework is equivalent to the scaling rate 
framework.
Consider a TTS reaction network as described by \eqref{eq:Q-ep_decomp}.
Taking $N=1/\ep$, $\rho_{fast}=1$, $\rho_{slow}=0$, 
it is easy to see that 
\begin{align*}
  X^N_{0,\theta }(t) = X^N_\theta (t) 
   \overset{\mathcal{D}}{=} X^\ep(t) .
\end{align*}
so $\lim_{N\to\infty}X^N_{0, \theta}(t) 
= \lim_{\ep\to0}X^\ep(t)$.
It remains to identify $f_\theta \left( \hat{X}_\theta(t) \right) $
from \eqref{eq:Xhat_defn}, \eqref{eq:gupta_scaling_sens}
 with $\overline{f}\left(\overline{X}(t)\right)$ from \eqref{eq:fbar_defn}.
 We do so by showing the equivalence of the fast-classes 
 $\M_l$ and the remainder spaces $\mathbb{H}_v$.
 
  \begin{lemma}
   The projection map $\Pi_2$ is invariant on fast-classes $\M_l$. 
   The set of remainder spaces 
   $\left\{ \mathbb{H}_v : v \in \Pi_2 \M \right\} $
   is in one-to-one correspondence with the set of fast-classes
   $\left\{ \M_l \right\} $.
   Additionally, each $x \in \M_l$
   corresponds to a unique element
   $y \in \mathbb{H}_v$ for some $v \in \Pi_2\M$.
 \end{lemma}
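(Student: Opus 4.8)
The plan is to work entirely with the linear-algebraic structure of the projection $\Pi_2$ together with the combinatorial definition of the fast-classes, invoking the ergodicity hypothesis on $(\mathbb{H}_v, \mathbb{C}^v)$ exactly once, to secure injectivity. First I would prove that $\Pi_2$ is constant on each fast-class. If $x \leftrightarrow x'$ inside $\M_l$, then by definition of the equivalence relation there is a finite chain of fast reactions joining them, so $x' - x$ is a sum of fast stoichiometric vectors $\zeta_k$ with $k \in \Gamma_1$. Since $\Pi_2 \zeta_k = \boldsymbol{0}$ for every $k \in \Gamma_1$ by the construction of $\mathbb{L}_2$, linearity gives $\Pi_2(x' - x) = \boldsymbol{0}$, i.e. $\Pi_2 x = \Pi_2 x'$. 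Hence $v(\M_l) \defd \Pi_2 x$ is a well-defined function of the fast-class, independent of the chosen representative $x$, which is the first assertion.

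Next I would set up the correspondence through this common projection value by defining $\Phi(\M_l) = \mathbb{H}_{v(\M_l)}$. Surjectivity is immediate: any $v \in \Pi_2 \M$ equals $\Pi_2 x$ for some $x \in \M$, and that $x$ lies in a fast-class sent by $\Phi$ to $\mathbb{H}_v$. For the third assertion I would observe that on a single slice $\{x \in \M : \Pi_2 x = v\}$ the assignment $x \mapsto y = (I - \Pi_2)x$ is injective with left inverse $y \mapsto v + y$, since $x = \Pi_2 x + (I - \Pi_2)x = v + y$ is just the direct-sum decomposition, and its image is $\mathbb{H}_v$ by definition. Moreover this identification carries a fast reaction $x \to x + \zeta_k$ ($k \in \Gamma_1$) to $y \to y + \zeta_k$ (again because $\Pi_2 \zeta_k = \boldsymbol{0}$), so the restricted fast dynamics on the slice are transported exactly onto the generator $\mathbb{C}^v$.

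The main obstacle, and the one point where the probabilistic hypothesis enters, is injectivity of $\Phi$, equivalently the statement that a whole slice $\{x : \Pi_2 x = v\}$ is a single fast-class rather than a disjoint union of several. Here I would argue directly: suppose $x \in \M_l$ and $x' \in \M_{l'}$ with $v(\M_l) = v(\M_{l'}) = v$. Their remainders $y = (I - \Pi_2)x$ and $y' = (I - \Pi_2)x'$ both lie in $\mathbb{H}_v$, and since $(\mathbb{H}_v, \mathbb{C}^v)$ is ergodic they are mutually accessible under $\mathbb{C}^v$; transporting this accessibility back through the bijection of the previous paragraph shows $x$ and $x'$ are mutually accessible under fast reactions, so $x \leftrightarrow x'$ and $\M_l = \M_{l'}$. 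Combining injectivity with the slice bijection then yields that $x \mapsto (I - \Pi_2)x$ maps $\M_l$ one-to-one onto $\mathbb{H}_{v(\M_l)}$, giving all three claims. I would take care to emphasize that ergodicity of $\mathbb{C}^v$ is precisely the translation of Assumptions \ref{assum:finite_states}--\ref{assum:recurr_states} into the remainder-space language, as it is the single hypothesis that makes the slice-to-class identification valid.
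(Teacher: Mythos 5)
Your treatment of the first and third assertions matches the paper's: invariance of $\Pi_2$ on fast-classes follows from $\Pi_2\zeta_k = \boldsymbol{0}$ for $k \in \Gamma_1$ and linearity, surjectivity of the induced map $\M_l \mapsto \mathbb{H}_{\Pi_2(\M_l)}$ is immediate, and the slice bijection $x \mapsto (I-\Pi_2)x$ with inverse $y \mapsto v+y$ is exactly the paper's final step. The problem is your injectivity argument. You prove that a slice $\left\{ x \in \M : \Pi_2 x = v \right\}$ is a single fast-class by invoking ergodicity (irreducibility) of $(\mathbb{H}_v, \mathbb{C}^v)$ and transporting accessibility back through the slice bijection. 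But irreducibility of $\mathbb{H}_v$ under $\mathbb{C}^v$ \emph{is} the statement that the slice is a single fast-class: under your own identification, $\mathbb{C}^v$ is just the fast-only dynamics restricted to the slice, so it is irreducible if and only if the slice is not a disjoint union of two or more fast-classes. Your closing remark that ergodicity of $\mathbb{C}^v$ ``is precisely the translation of Assumptions~\ref{assum:finite_states} and \ref{assum:recurr_states}'' is exactly the unproven point: those assumptions give ergodicity of each fast-class $\M_l$ under $\widetilde{Q}^{(l)}$, but say nothing a priori about whether two \emph{distinct} fast-classes can share the same projection $v$. Ruling that out is the entire content of injectivity, and you have assumed it.

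The paper closes this gap with a direct algebraic argument that makes no reference to the dynamics on $\mathbb{H}_v$: if $\Pi_2 x = \Pi_2 x' = v$, then since $\Pi_2$ annihilates exactly $\operatorname{span}\left\{ \zeta_k : k \in \Gamma_1 \right\}$, the difference $x' - x$ is a combination $\sum_{k\in\Gamma_1}(c_k - c'_k)\zeta_k$, and the integrality of the states is used to conclude the coefficients lie in $\mathbb{N}$, whence $x$ and $x'$ communicate by fast reactions and lie in the same $\M_l$. Whatever one thinks of the details of that step, its logical role is essential: it establishes the slice-to-class identification from the state-space geometry alone, and only \emph{then} does ergodicity of $(\mathbb{H}_v, \mathbb{C}^v)$ follow from Assumptions~\ref{assum:finite_states} and \ref{assum:recurr_states} via the correspondence --- which is what one needs in order to apply the cited sensitivity-convergence theorem. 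To repair your proof you must supply an argument of this kind in place of the appeal to ergodicity of $\mathbb{C}^v$.
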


 \begin{proof}
   Define $\eta: \left\{ \M_l \right\} \to 
   \left\{ \mathbb{H}_v : v \in \Pi_2 \M \right\} $ by
   \begin{align*}
     \eta(\M_l) = \mathbb{H}_{\Pi_2(x)} 
     \qquad \text{for any  } x \in \M_l .
   \end{align*}
   Then $\eta$ is well-defined, since $x, y \in \M_l$ implies
   that $y = x + \sum_{k \in \Gamma_1} c_k \zeta_k$ for some 
   $c_k \in \mathbb{N}$, and $\Pi_2(\zeta_k) = \boldsymbol{0}$ for all
   $k \in \Gamma_1$ gives $\Pi_2(y) = \Pi_2(x)$.
   Clearly, $\eta$ is also onto. 
   
   It remains to establish $\eta$ is injective.
   It is sufficient to show that 
   if $\Pi_2(x) = v$ and $\Pi_2(x')=v$ for $x, x' \in \M$,
   then $x$ and $x'$ belong to the same fast-class $\M_l$.
   Since $\Pi_2$ projects onto the span of the complement of
   $\operatorname{span} \left\{ \zeta_k: k \in \Gamma_1 \right\}$,
   we have 
   $v-x =\sum_{k \in \Gamma_1} c_k \zeta_k$
   and $v-x' = \sum_{k \in \Gamma_1} c'_k \zeta_k$
   for some $c_k, c'_k \in \mathbb{R}$. 
   Then $x' = x + \sum_{k \in \Gamma_1} (c_k - c'_k) \zeta_k$,
   with $x', x, \zeta_k \in \mathbb{N}^d$, so it follows that
   $(c_k -c'_k) \in \mathbb{N} $ for all $k$.
   Hence $x$ and $x'$ communicate by fast reactions and thus belong
   to the same fast-class $\M_l$. Therefore, 
   $\Pi_2$ is invariant on fast-classes and $\eta$ is injective.
   
   Finally, since $\Pi_2(\cdot)$ is invariant on fast-classes
   $\M_l$, it follows that $x \to x-\Pi_2(x)$ bijectively
   maps elements of $\cup_l \M_l$ to elements of 
   $\cup_{v \in \Pi_2(\M)} \mathbb{H}_v$ such that
   $x, x' \in \M_l$ implies $(x - \Pi_2(x)), (x'-\Pi_2(x')) 
   \in \mathbb{H}_{\Pi_2(\M_l)}$.
 \end{proof}

Because of the direct correspondence between
$\left\{ \Pi_2(x) : x \in \M \right\} $
and 
$\left\{ \M_l \right\}$,
we see (upon reordering states) that
for $v=\Pi_2(\M_l)$, we have
$\pi^v_\theta = \widetilde{\pi}^{(l)}_\theta$,
so that $\hat\lambda_k(v, \theta) 
= \overline{\lambda}_k (\M_l, \theta)$,
and $f_\theta(v) = \overline{f}(\M_l, \theta)$.
Thus, $f_\theta \left( \hat{X}(t) \right) $
has the same distribution as 
$\overline{f}\left( \M_l, \theta \right)$
and so 
$\frac{\partial}{\partial \theta}
\E\left\{ f_\theta\left( \hat{X}_\theta(t) \right) \right\}
= \frac{\partial}{\partial \theta} 
\E\left\{ \overline{f} 
\left( \overline{X}_\theta (t) \right) \right\}$.
Therefore, using \eqref{eq:gupta_scaling_sens} we have
\begin{align}
\begin{aligned}
\lim_{\ep \to 0} \partial_\theta 
\E \left\{ f\left( X^\ep(t) \right) \right\}
=
\lim_{N \to \infty} \partial_\theta 
\E \left\{ f\left(X^N_{0,\theta}(t) \right) \right\} \\
=
\partial_\theta \E \left\{
f_\theta \left( \hat{X}_\theta (t) \right) \right\}
=
\partial_\theta \E \left\{
\overline{f} \left( \overline{X}(t) \right) \right\}
\end{aligned}
\end{align}
and hence Proposition \ref{prop:sens_converge}.

\section{Analytic solution of the Model System}
\label{sec:AnalylicSolnExample}

{In a well-mixed system with linear
propensities, the time-evolution of the system can be obtained from a set of ordinary 
differential equations (ODE). 
The single time-scale
(STS) system can be modeled with 
a system of
ODEs. The two time-scale (TTS) system imposes
algebraic constraints for the fast modes, resulting in
an algebraic 
differential system of equations. In both cases, a set of adjoint ODEs can be used to compute sensitivities alongside species populations.}

In our model system, gaseous species A adsorbs onto a catalyst surface, isomerizes to species B, and then desorbs. A diagram of the reaction network is shown in Figure \ref{fig:networkpic}. The reactions along with their rate laws are shown in Table \ref{tab:toyrxns}. $N_A$, $N_B$, and $N_*$ denote the surface coverages of species A, B, and empty sites respectively. The adsorption/desorption of species A is assumed to be much faster than the others. The separation of time scales is captured with the dimensionless parameter $\ep << 1$. The system contains $M=3$ species and $R=6$ reactions. Mathematically, we use the $M\times 1$ column vector $N$ to specify the species populations, where $N_1=N_A$, $N_2=N_B$, and $N_3=N_*$.    

\begin{figure}
\begin{minipage}[b]{.5\linewidth}
	\centering
	\includegraphics[width=0.8\textwidth]{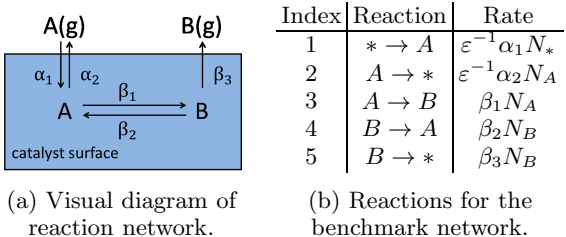}
	\subcaption{Visual diagram of reaction network.}
	\label{fig:networkpic}
\end{minipage}%
\begin{minipage}[b]{.5\linewidth}
	\centering
	\begin{tabular}{c | c | c }
	
	    Index & Reaction & Rate \\
	    	\hline
		1 & $*\rightarrow A$ & $\ep^{-1} \alpha_1 N_*$ \\
		2 & $A\rightarrow *$ & $\ep^{-1} \alpha_2 N_A$ \\
		3 & $A\rightarrow B$ & $\beta_1 N_A $ \\
		4 & $B\rightarrow A$ & $\beta_2 N_B$ \\
		5 & $B\rightarrow *$ & $\beta_3 N_B$ \\
	  \end{tabular}
	\subcaption{Reactions for the benchmark network.}
	\label{tab:toyrxns}
\end{minipage}
\caption{Description of model chemical reaction network.}
\label{fig:networkdescription}
\end{figure}

{The linear dependence of the reaction rates is written as}
\begin{equation}
r(N) = \left[ \begin{array}{c}
\ep^{-1} \alpha_1 N_3 \\
\ep^{-1} \alpha_2 N_1 \\
\beta_1 N_1 \\
\beta_2 N_2 \\
\beta_3 N_2  \end{array} \right].
\end{equation}

Each row of the stoichiometric matrix corresponds to a different species, which are $N_1$, $N_2$, and $N_3$ respectively. The columns correspond to each of reactions 1-5 in order. Extracting the information from Table \ref{tab:toyrxns} and putting it in mathematical form gives the $M\times R$ 
{stoichiometric} matrix
\begin{equation}
S=\left[ \begin{array}{ccccc}
1 & -1 & -1 & 1 & 0 \\
0 & 0 & 1 & -1 & 1 \\ 
-1 & 1 & 0 & 0 & -1 \end{array} \right]
\end{equation}

{The transformation matrix} 
\begin{equation}
T = \left[ \begin{array}{ccc}
1 & 0 & 0 \\
0 & 1 & 0 \\ 
1 & 1 & 1 \end{array} \right].
\end{equation}
{yields $y=T\cdot N$ and} 
$T$ can be decomposed into $T_f = \left[ \begin{array}{ccc}
1 & 0 & 0 \end{array} \right]$ and $T_s = \left[ \begin{array}{ccc}
1 & 1 & 0 \\ 
0 & 0 & 1 \end{array} \right]$ {for the slow modes} by looking at the 0 rows of $S'_f$. This gives us the transformed variables as $y_f=\left[ \begin{array}{c}
N_1\end{array} \right]$ and $y_s=\left[ \begin{array}{c}
N_2 \\ 
N_1+N_2+N_3 \end{array} \right]$.

In the context of our example problem, we can assign physical meaning to the transformation:
The variable $y_1=N_A$ is affected by both slow and fast reactions. For a given set of slow variables, we can solve for $y_1$ to specify the equilibrium constraint of $r_1=r_2$. The variable $y_2=N_B$ is unaffected by the fast adsorption/desorption of A, but is affected by the slow reactions.
Finally, the variable $y_3=N_A+N_B+N_*$ is a second "slow" variable. In this example $y_3=1$ applies at all times due to stoichiometric constraints. However, it is still identified as a "slow mode" because this constraint is not a consequence of disparities in reaction time scales.

The system is simulated with the choice of parameters $N_0=\left[ \begin{array}{c}
30 \\
60 \\ 
10 \end{array} \right]$, $\alpha_1=1$, $\alpha_2=1.5$, $\beta_1=2$, $\beta_2=1$, $\beta_3=0.4$, $\ep=0.01$. The simulation results are shown in Figure \ref{fig:A}. {Table \ref{tab:table2} shows values at $t=1.3$s and $t=100$s along with CLR and CELR estimates with statistical confidence intervals.} Derivatives with respect to $\beta_2$ and $\beta_3$ overlap because both affect system properties through the the independent parameter $\beta_2+\beta_3$. 

 In general, the system parameters need not be the rate constants themselves. A different parameterization would involve a transformation of the rate constants. Sensitivities could be obtained through a chain rule. 

\begin{figure}

	\centering
	\includegraphics[width=1.0\linewidth]{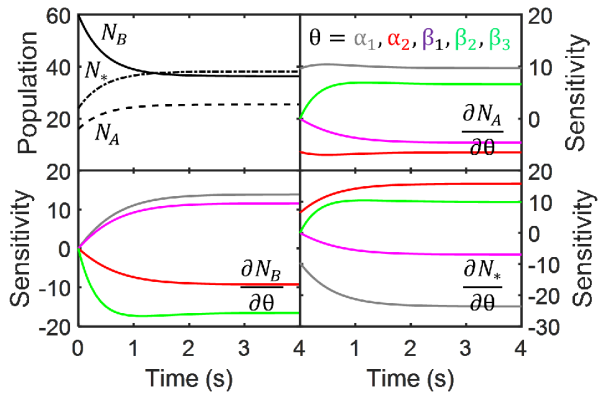}
\caption{Network results for the two time-scale system. Graphs show population counts (top left), derivatives of species A (top right), derivatives of species B (bottom left), and derivatives of empty sites (bottom right).}
\label{fig:A}
\end{figure}

\begin{table}[h!]
  \begin{center}
    \caption{Comparison of the CLR and CELR estimators with the ODE solution. Values show actual values rather than errors. Values in the table refer to the sensitivity of the species B with respect to the parameter given by the row label. 95\% confidence intervals are based on statistical noise.}
    \label{tab:table2}
    \begin{tabular}{cccc}
    \hline
    & \multicolumn{3}{c}{$t=1.3$s}\\
    & ODE & CLR & CELR\\
    \hline 
    $\alpha_1$ & 11.9 & $11.8 \pm 1.5$ & $7.2 \pm 0.9$\\ 
    $\alpha_2$ & -7.9 & $-7.7 \pm 1.0$ & $-4.7 \pm 0.6$\\
    $\beta_1$ & 9.9 & $10.0 \pm 1.2$ & $6.0 \pm 0.8$\\
    $\beta_2$ & -17.4 & $-17.2 \pm 2.4$ & $-14.5 \pm 1.7$\\ 
    $\beta_3$ & -17.4 & $-17.1 \pm 3.4$ & $-13.3 \pm 2.4$\\ 
    \hline 
    & \multicolumn{3}{c}{$t=100$s}\\
    & ODE & CLR & CELR\\
    \hline
    $\alpha_1$ & 13.9 & $2.3 \pm 13.7$ & $13.9 \pm 1.4$\\
    $\alpha_2$ & -9.3 & $-3.4 \pm 8.9$ & $-9.2 \pm 1.0$\\
    $\beta_1$ & 11.6 & $4.1 \pm 10.6$ & $11.7 \pm 1.2$\\
    $\beta_2$ & -16.5 & $-12.8 \pm 18.1$ & $-16.1 \pm 2.0$\\
    $\beta_3$ & -16.5 & $-19.9 \pm 29.0$ & $-16.5 \pm 3.2$
    
    \end{tabular}
  \end{center}
\end{table}

%

%

\end{document}